\definecolor{lightgray}{rgb}{0.83, 0.83, 0.83}
\definecolor{lightblue}{rgb}{0.83, 0.83, 1}
\definecolor{ao}{rgb}{0.0, 0.5, 0.0}
\definecolor{dg}{rgb}{0.01, 0.75, 0.24}
\theoremstyle{plain}
\newtheorem{thm}{Theorem}[section]
\newtheorem{lem}[thm]{Lemma}
\newtheorem{rmk}[thm]{Remark}
\theoremstyle{definition}
\theoremstyle{definition}
\newcommand{\norm}[1]{\left\lVert#1\right\rVert}
\newcommand{\fhi}{\varphi}
\def\th{\theta}
\newcommand{\ph}{\varphi}
\def\dd{\,\mathrm{d}}
\def\dive{\mathrm{\,div\,}}
\def\ddt{\dfrac{d}{dt}}
\def\R{\mathbb{R}}
\def\AA{\mathcal{A}}
\def\SS{\mathcal{S}}
\def\PP{\mathcal{P}}
\def\CC{\mathcal{C}}
\def\BB{\mathcal{B}}
\def\th{\theta}
\def \pn{{\partial_{\boldsymbol n}}}
\def \pt{{\partial_t}}
\def\d{{\rm d}}
\def\ddt{\dfrac{\d}{\d t}}
\def \pt{{\partial_t}}
\def \pn{{\partial_{\boldsymbol n}}}
\def \ph{{\varphi}}
\def\PP{{\mathcal P}}
\def\into{{\int_\Omega}}
\def \th{{\vartheta}}
\def\io{\int_{\Omega}}
\def\12{\dfrac{1}{2}}
\def\p{\partial}
\def\pt{\partial_t}
\numberwithin{equation}{section}
\begin{document}

\title{An Allen-Cahn tumor growth model with temperature}
\author {Stefania Gatti\thanks{University of Modena and Reggio Emilia, Dipartimento di Scienze Fisiche, Informatiche e Matematiche, via Campi 213/b, I-41125 Modena (Italy).\newline
E-mail: \textit{stefania.gatti@unimore.it}}\;\;\;
Erica Ipocoana\thanks{Freie Universität Berlin,
Department of Mathematics and Computer Science, Arnimallee 9, 14195  Berlin (Germany).\newline
E-mail: \textit{erica.ipocoana@fu-berlin.de}}\;\;\;
Alain Miranville \thanks{Henan Normal University, School of Mathematics and information Science, Xinxiang, Henan (China).}
\thanks{Université Le Havre Normandie, Laboratoire de Mathématiques Appliquées du Havre (LMAH), 25, rue Philippe Lebon
BP 1123, 76063 Le Havre cedex (France).\newline
E-mail: \textit{alain.miranville@univ-lehavre.fr}}
}

\maketitle

\begin{abstract}
In this paper, we propose a new non-isothermal Allen-Cahn (Ginzburg-Landau) model for tumor growth. After deriving it using a microforces approach, we study its well-posedness. In particular, we are able to prove the existence and uniqueness of a local and global-in-time solution to our PDE system.
\end{abstract}

\medskip
\noindent \textbf{Keywords:} Allen-Cahn, phase-field, tumor growth, partial differential equations, diffuse interface. \medskip \\
\medskip
\noindent \textbf{MSC 2020:}

\section{Introduction}
An increasing number of mathematical models have been proposed to support the medical community in better understanding and treating cancer (see \cite{byrne} and the review papers \cite{F-survey, iran, Xsurv}).
Indeed, mathematical models might provide further insights in tumor growth, focusing
on the reality-matching mathematical assumptions on parameters and analysis of long-time behaviour. Among these, phase-field models have been investigated intensively. In this framework, the evolution of the tumor is regulated by an order parameter and driven by several biological mechanisms such as proliferation via nutrient consumption, apoptosis \cite{I, mrs19}, chemotaxis and
active transport of specific chemical species \cite{garcke}. Given the medical nature of the problem, models taking into account the effects of therapies have also been studied e.g. for brain tumors \cite{AIMS} and prostatic cancer \cite{CGLMRR-wp, CGLMRR-contr}.\\
In spite of the abundance of mathematical models in tumor growth, still an aspect has so far been neglected, namely,
the thermal effects on cancer.
Nonetheless, it is experimentally verified that both high and low temperatures can lead to partial or complete destruction of tumor cells.
Sometimes hyperthermia or hypothermia are an adiuvant to other therapies, others they are a last resort for untreatable deseases (see the review \cite{temp} for the former treatment and \cite{hypo1}, \cite{hypo2}, \cite{Seki}
 for the latter).
It is also noticed that they are quite difficult to administer without damage to the surrounding healthy tissues and  the overall  mechanism behind their efficacy is not well understood.

In this paper, we present and study the well-posedness of a non-isothermal Allen-Cahn model for tumor growth coupled with nutrient dynamics.
Here, we suppose the evolution of the tumor to be governed by the proliferation via nutrient consumption, the apoptosis and the thermal influence. Although we do not include thermal control, the temperature contribution to the tumor equation is a first attempt to render the fact that low temperatures slow down tumor growth.
We also underline that the tumor cells are assumed to die only by apoptosis, therefore we do not consider e.g. necrosis \cite{garcke-necr, WLFC}.
Our thermodynamically consistent model allows to include the temperature influence on tumor growth, achieving at the same time the global existence and uniqueness of {\it not too weak} solutions. Indeed, up to our knowledge, the only other non-isothermal phase-field model for tumor growth in the literature is a Cahn-Hilliard model recently studied in \cite{I} (see also \cite{EGS} and \cite{ERS} for similar non isothermal phase-field equations for fluids mixtures). In this previous paper, only the existence of weak entropy solutions was proven, due to the  chemical potential term in the right hand side of the temperature equation and therefore to the impossibility of gaining enough regularity to pass to the limit.
As a consequence, also the uniqueness of the solution could not be achieved. Besides, the Cahn-Hilliard framework precludes any maximum principles for the phase-field. Here, switching to an Allen-Cahn system, we are able to overcome the analogous difficulties exploiting a completely different approach, based on the decoupling of the system, as proposed in \cite{MSJEE, fterich}. This strategy will be made clear in Section \ref{sec-wp}. \\
We remark that for Allen-Cahn models for tumor growth, the literature is not as extensive as for Cahn-Hilliard models. In fact, we refer to the works \cite{ CGLMRR-wp, XVG16, XVG16-, XVG20} for models and well-posedness and to \cite{AFMW-contr, CGLMRR-contr} for optimal control analysis.\\
Our PDE system consists of three evolution equations, governing the phase-field, the absolute temperature and the nutrient concentration, respectively:
namely, given a bounded regular domain  $\Omega \subset \R^3$, we consider
\begin{align} \label{eqfhi}
\beta\fhi_t-\Delta\fhi+F'(\fhi)-\theta=(\PP\sigma-\AA)h(\fhi)
& \quad\text{in}\quad \Omega\times (0,T)
\vspace{2mm}\\ \label{eqtemp}
c_V\theta_t-\dive[\kappa(\theta)\nabla\theta]-\beta(\fhi_t)^2+\theta \fhi_t=0
& \quad\text{in}\quad \Omega\times (0,T)\vspace{2mm}\\ \label{eqnutr}
\sigma_t-\Delta\sigma=-\CC\sigma h(\fhi)+\BB(\sigma_B-\sigma) & \quad\text{in}\quad \Omega\times (0,T)\vspace{2mm}\\ \label{bc}
\pn \fhi=\pn \theta=\pn\sigma=0 & \quad\text{on}\quad \partial\Omega\times (0,T)\vspace{2mm}\\ \label{ic}
\fhi(0)=\fhi_0,\quad \theta(0)=\theta_0,\quad\sigma(0)=\sigma_0 &\quad\text{in}\quad \Omega.
\end{align}
The order parameter or phase-field $\fhi$ denotes the local concentration of tumor cells: therefore, biologically,  $\ph$ should vary in a bounded interval, say $[0,1]$. It means that $\ph=+1$ in the regions exclusively occupied by tumor cells, $\ph=0$ in the healthy ones while $\ph\in (0,1)$ in the so-called diffuse interface, that is, a thin layer where the phases coexist.
Although we are able to analytically prove that $\ph\geq 0$ is bounded on any finite time interval, the general source term prevents to show that $\ph\leq 1$. The  absolute temperature is denoted by $\theta$  and $\sigma$ represents the concentration of a nutrient, such as oxygen and
glucose, for tumor cells: due to their meaning, both variables should be nonnegative. The parameters $\PP, \AA, \CC$, $\BB$ and $\sigma_B$ are positive constants indicating  tumor proliferation rate, apoptosis rate, nutrient consumption rate, the
blood-tissue transfer rate and nutrient concentration in the pre-existing vasculature, respectively. The function $h$, whose choice is detailed in Section \ref{asssec}, is a bounded, monotone increasing function supposed to be nonnegative on $[0,1]$. Therefore, assuming $\ph\geq 0$, the tumor grows when $(\PP\sigma-\AA)>0$ and reduces otherwise (in particular, the higher the concentration of tumor cells, the faster they die).
The term $\CC\sigma h(\fhi)$ represents the consumption of the nutrient by the tumor
cells. We here consider the case where the tumor has its own vasculature, which leads to the presence of the term $\BB(\sigma_B-\sigma)$ in the nutrient equation: in particular, if $\sigma_B>\sigma$, the term $\BB(\sigma_B-\sigma)$ models the supply of nutrient
from the blood vessels, otherwise,
the transport of nutrient away from the domain.\\
Eventually, $\kappa(\theta)$ stand for the heat conductivity, $c_V$ the specific heat and the nonnegative constant $\beta$ is a constitutive modulus.
Our last remark concerns the function $F(\fhi)$, supposed to be a polynomial double-well potential.
 \\

The plan of the paper is the following. As we here present a new phase-field model, we first derive it in Section \ref{secder}, following the microforces approach. Then, in Section \ref{sec-wp}, we investigate the well-posedness of a simplified version of the model. Namely, having introduced our assumptions and notation in
Section \ref{asssec}, we prove the existence and uniqueness of a local-in-time solution in Theorem \ref{localthm}, in Section \ref{seclocal}. Heuristically, the main idea behind the proof of this result is decoupling our problem, first  fixing $\theta$ in a suitable regularity space and then the couple $(\fhi,\sigma)$, in the spirit of \cite{MSJEE}. This yields to two intermediate results for the frozen problems. Each of these results, i.e. Lemmas \ref{lemmino1} and \ref{lemmino2}, relies on the Galerkin discretization method. The proof is concluded by composing the two frozen problems and using Schauder's fixed point theorem. Finally, in Section \ref{secglobal}, we show that we are able to extend the local solutions and thus we gain a global-in-time result, namely Theorem \ref{globalthm}.

\section{Derivation of the model}\label{secder}

We suppose that a two-component mixture consisting of healthy cells and tumor cells occupies an open spatial domain $\Omega \subset \R^3$. We denote by $\fhi(x,t)$ the tumor phase concentration, $\theta(x,t)$ the absolute temperature and $\sigma(x,t)$  the concentration of a nutrient for the tumor cells.\\
We follow Gurtin’s approach based on microforces (proposed in  \cite{gurtin} and largely used later on, e.g. in \cite{fterich, I, marvschimp, ms2005mod}.) in order to derive our model \eqref{eqfhi}--\eqref{eqnutr}. As a matter of fact, we treat separately the balance laws and the constitutive relations. In particular, we postulate the following balance law for internal microforces
\begin{eqnarray}\label{micro1}
\dive{\pmb{\xi}}+\pi + \gamma=0,
\end{eqnarray}
where $\pmb{\xi}$ is a vector representing the microstress, $\pi$ is a scalar corresponding to the internal microforces and $\gamma$ to external microforces. Microforces describe the forces associated with microscopic configurations of atoms, differently from standard forces, which are associated with macroscopic length scales. These different length scales are the reason why a separate balance law for microforces is needed. Such interatomic forces may be mirrored on the macroscopic level
by fields which perform work when the order parameter undergoes changes. Therefore this working can be described in terms of $\fhi_t$, which explains why the microforces are scalar rather than vector quantities.\\
For sake of completeness, we also refer to the pioneering paper \cite{BFL} after which the models of phase transition with microscopic movements developed by Fr\'{e}mond have been studied extensively. In particular, we cite \cite{LSS, LSS2002}, where the nonlinearities are similar to the ones appearing in our model.

\subsection{Order parameter equation}
The internal energy density of the system is given by Gibbs' relation
\begin{eqnarray}\label{inten}
e = \psi + \theta s,
\end{eqnarray}
where $s$ represents the entropy of the system, namely
\begin{eqnarray}\label{entropy}
s = -\dfrac{\partial\psi}{\partial\theta}.
\end{eqnarray}
The term $\psi=\psi(\theta,\ph,\nabla \ph)$ denotes the free energy, whose form will be chosen later in order to make this first part of the derivation more general.\\
Following \cite{gurtin}, we write the first law of thermodynamics in the form
\begin{eqnarray}\label{1tif}
\dfrac{d}{dt}\int_{\textit{R}} e \dd x = -\int_{\partial \textit{R}} \pmb{q}\cdot \pmb{\nu} \dd \Sigma + \mathcal{W}(\textit{R})+\mathcal{M}(\textit{R}),
\end{eqnarray}
where $\pmb{q}$ is the heat flux, \textit{R} is the control volume and $\pmb\nu$ is the outward unit normal to $\partial R$. Besides,
\begin{align}\label{W}
&\mathcal{W}(\textit{R}) = \int_{\partial \textit{R}} (\pmb{\xi} \cdot \pmb{\nu})\dfrac{\partial \fhi}{\partial t} \dd \Sigma + \int_R \gamma \dfrac{\partial \fhi}{\partial t} \dd x,\\ \label{M}
&\mathcal{M}(\textit{R}) =0
\end{align}
are the rate of working and the rate at which free energy is added to \textit{R}, under the  hypothesis of no heat supply, respectively. We recall that \eqref{M} holds since we here consider the case of the Ginzburg-Landau equation.\\
By Green's formula, since the control volume \textit{R} is arbitrary, \eqref{1tif} reads
\begin{eqnarray}\label{1tdf}
\dfrac{\partial e}{\partial t}= -\dive \pmb{q} + \dfrac{\partial\fhi}{\partial t} \dive \pmb\xi + \pmb\xi\cdot \nabla\dfrac{\partial \fhi}{\partial t} + \dfrac{\partial \fhi}{\partial t} \gamma.
\end{eqnarray}
Hence, from the microforce balance \eqref{micro1}, we infer

\begin{eqnarray}\label{firstlaw}
\dfrac{\partial e}{\partial t}= -\dive \pmb{q} -\pi \dfrac{\partial\fhi}{\partial t}+ \pmb\xi\cdot \nabla\dfrac{\partial \fhi}{\partial t}.
\end{eqnarray}

\noindent We impose the validity of the second law of thermodynamics in the form of the Clausius-Duhem inequality, namely
\begin{eqnarray}\label{C-D}
\theta \left( \dfrac{\partial s}{\partial t} + \dive \pmb{Q}\right) \geq 0,
\end{eqnarray}
where the entropy flux $\pmb{Q}$ is
\begin{equation}\label{qQ}
\pmb{Q}= \dfrac{\pmb{q}}{\theta}.
\end{equation}

\noindent Working on the left hand side of \eqref{C-D}, we have
\begin{align}\nonumber
\theta \left( \dfrac{\partial s}{\partial t} + \dive \pmb{Q}\right) &\stackrel{\eqref{inten}}{=}
\dfrac{\partial e }{\partial t}-\dfrac{\partial \psi}{\partial t}-s\dfrac{\partial \theta}{\partial t} +\theta\dive \pmb{Q}\\ \nonumber
&\stackrel{\eqref{qQ}}{=}\dfrac{\p e}{\p t} -\dfrac{\p \psi}{\p t}-s\dfrac{\p \theta}{\p t} + \dive \pmb{q} -\pmb{Q}\cdot\nabla \theta\\ \label{cd1}
& \stackrel{\eqref{firstlaw}}{=}-\pi\dfrac{\p \fhi}{\p t}+\pmb{\xi} \cdot\nabla \dfrac{\p\fhi}{\p t} -\dfrac{\p\psi}{\p t}-s\dfrac{\p \theta}{\p t}-\pmb{Q}\cdot\nabla\theta.
\end{align}

The third  and fourth terms in the last equation, computed by the chain rule and \eqref{entropy}, read
\begin{equation}\label{etrick}
\dfrac{\p \psi}{\p t}+s\dfrac{\p \theta}{\p t}
= \dfrac{\p \psi}{\p \fhi}\dfrac{\p \fhi}{\p t}+\dfrac{\p \psi}{\p \nabla \fhi}\dfrac{\p \nabla \fhi}{\p t}.
\end{equation}
Then, inserting  \eqref{etrick} in \eqref{cd1} implies

\begin{align*}
\theta \left( \dfrac{\partial s}{\partial t} + \dive \pmb{Q}\right)
& \stackrel{\;}{=}-\pi\dfrac{\p \fhi}{\p t}+\pmb{\xi} \cdot\nabla \dfrac{\p\fhi}{\p t}
-\dfrac{\p \psi}{\p \fhi}\dfrac{\p \fhi}{\p t}-\dfrac{\p \psi}{\p \nabla \fhi}\dfrac{\p \nabla \fhi}{\p t}
-\pmb{Q}\cdot\nabla\theta\\
& \stackrel{\;}{=}\left(-\pi-\dfrac{\p\psi}{\p\fhi}\right)\dfrac{\p \fhi}{\p t}+\left(\pmb{\xi}- \dfrac{\p \psi}{\p \nabla \fhi}\right)\dfrac{\p\nabla\fhi}{\p t} -
\pmb{Q}\cdot\nabla\theta.
\end{align*}
Hence, in order for \eqref{C-D} to hold, we take
\begin{align}\label{xi}
&\pmb{\xi}= \dfrac{\p \psi}{\p \nabla \fhi},\\ \label{pineq}
&\left(-\pi-\dfrac{\p\psi}{\p\fhi}\right)\dfrac{\p \fhi}{\p t} -
\pmb{Q}\cdot\nabla\theta \geq 0.
\end{align}
Dividing \eqref{pineq} by $\theta$ (supposed to be strictly greater that zero), relation \eqref{qQ} yields to
\begin{equation}\label{ineqcost}
\dfrac{1}{\theta}\left(\pi+\dfrac{\p\psi}{\p\fhi}\right)\dfrac{\p \fhi}{\p t}+\dfrac{\pmb{q}}{\theta^2}\cdot\nabla\theta=
\dfrac{1}{\theta}\left(\pi+\dfrac{\p\psi}{\p\fhi}\right)\dfrac{\p \fhi}{\p t}-\pmb{q}\cdot\nabla\dfrac 1\theta  \leq 0.
\end{equation}
Arguing similarly as in \cite{ms2005mod}, we may take
\begin{equation}\label{qchoice}
\pmb{q}=\pmb{b}\dfrac{\p\fhi}{\p t}+\mathbb{B}\nabla\dfrac 1\theta,
\end{equation}
where $\pmb{b}$ is a vector and $\mathbb{B}$ is a matrix possibly depending on the constitutive variables $\fhi,\nabla\fhi,\theta,\nabla\theta,\sigma.$ According to \cite{gurtin}, we set
\begin{equation}\label{betaconst}
\dfrac{1}{\theta}\left(\pi+\dfrac{\p\psi}{\p\fhi}\right)= -\beta\dfrac{\p \fhi}{\p t}-\pmb{a}\cdot\nabla\dfrac 1\theta,
\end{equation}
where $\beta$ is a positive constant and $\pmb{a}$ is a vector depending on the constitutive variables. In particular, we set
\begin{align*}
\pmb{a}&=0,\\
\pmb{b}&=0,\\
\mathbb{B}&={\mathbb{I}_d\kappa(\theta)\theta^2},
\end{align*}
with
$\kappa(\theta)=1+\kappa_0\theta^q$, for some $q\geq 2$ and some $\kappa_0\geq 0$. These constitutive choices ensure that \eqref{ineqcost} holds.
Combining \eqref{betaconst} with the microforce balance \eqref{micro1}, it follows that
\begin{equation}\label{eqfinor}
\beta\dfrac{\p \fhi}{\p t}=\dfrac{1}{\theta}\dive \pmb{\xi}+\dfrac{1}{\theta}\gamma-\dfrac{1}{\theta}
\dfrac{\p\psi}{\p\fhi}.
\end{equation}

\noindent We now postulate the free energy density $\psi$ in the form
\begin{eqnarray}\label{fren}
\psi = \dfrac{\varepsilon}{2}|\nabla\fhi|^2 + \dfrac{1}{\varepsilon}F(\fhi)+f(\theta)-\theta \fhi + N(\fhi),
\end{eqnarray}
where $\varepsilon$ is a positive constant related to the thickness of the interface and the function $F$ denotes a polynomial double well potential having at least cubic growth at infinity. Since we assume $0$ and $1$ to be the pure phases, we will take $F(\ph)=\ph^2(1-\ph)^2$ (cf. \cite{XVG16}).
)\\
The term $f$ in \eqref{fren} describes the part of free energy which is purely caloric and is related to the specific heat $c_V(\theta) = Q'(\theta)$ through relation $Q(\theta)=f(\theta)-\theta f'(\theta)$.
Finally, $N$ represents both the chemical energy of the nutrient and the energy contributions given by the interactions between the tumor tissues and the nutrient.\\

\noindent Since $\psi$ is now supposed to take the form \eqref{fren}, according to \eqref{xi}, we infer
\begin{align}\label{xicost}
\pmb{\xi}=\varepsilon\nabla\fhi
\end{align}
and moreover
\begin{equation*}
\dfrac{\p\psi}{\p\fhi}= \dfrac{1}{\varepsilon}F'(\fhi)-\theta +\dfrac{\p N}{\p \fhi}.
\end{equation*}
The term $\frac{\p N}{\p \fhi}$ is known in the literature for describing chemotaxis (see e.g. \cite{garcke}), that we here neglect, therefore we take  $\frac{\p N}{\p \fhi}=0$.
Inserting these last relations in equation \eqref{eqfinor}, we obtain
\begin{equation*}
\beta\dfrac{\p \fhi}{\p t}=\dfrac{\varepsilon}{\theta}\Delta \fhi+\dfrac{1}{\theta}\gamma-\dfrac{1}{\theta\varepsilon}
F'(\fhi)+1.
\end{equation*}
Supposing that $\beta\theta$ is a constant still denoted by $\beta$, we get
\begin{equation}\label{eqfhigamma}
\beta\varphi_t-\varepsilon\Delta\fhi+\dfrac{1}{\varepsilon}F'(\fhi)-\theta = \gamma.
\end{equation}

\noindent Postulating the source term $\gamma$ to be
$$\gamma = (\PP\sigma-\AA)h(\fhi),$$
we recover equation \eqref{eqfhi}.
In particular, $\PP>0$ is the tumor proliferation rate, $\AA>0$ is the apoptosis rate while
the function $h$ renders how the tumor regulates several phenomena: in this case, proliferation and apoptosis.
Therefore, $h(0)=0$ and is monotone increasing with an upper threshold.

Notice that the constitutive relation regarding the source term $\gamma$ complies with the fact that tumor growth is proportional to the nutrient supply in the tumoral region. In particular, the tumor grows if $\PP\sigma-\AA>0$, otherwise it shrinks, according to apoptosis of tumor cells.

\subsection{Temperature equation}
According to \eqref{fren}, Gibbs' relation \eqref{inten} reads
\begin{align*}
e = \dfrac{\varepsilon}{2}|\nabla\fhi|^2+\dfrac{1}{\varepsilon}F(\fhi)+f(\theta)-\theta f'(\theta) + N(\fhi),
\end{align*}
therefore it holds

\begin{equation}\label{relen1}
\dfrac{\p e}{\p t}=\varepsilon\nabla\fhi\cdot\dfrac{\p\nabla\fhi}{\p t}+\dfrac{1}{\varepsilon}F'(\fhi)\fhi_t+Q'(\theta)\theta_t,
\end{equation}
since we recall that we set  $\frac{\p N}{\p \fhi}=0$. Moreover, combining the energy balance \eqref{firstlaw} with \eqref{qchoice}, \eqref{micro1} and \eqref{xi}, we get
\begin{align}\label{relen2}
\dfrac{\partial e}{\partial t}= -\dive\left(-\kappa(\theta)\nabla\theta-\dfrac{\p\psi}{\p\nabla\fhi}\dfrac{\partial\fhi}{\partial t}\right)+ \gamma \dfrac{\partial\fhi}{\partial t}.
\end{align}
Putting together \eqref{relen1} and \eqref{relen2} and recalling \eqref{xicost}, it follows
\begin{equation}\label{relen3}
Q'(\theta)\theta_t-\dive(\kappa(\theta)\nabla\theta)=\varepsilon\Delta\fhi\fhi_t-\dfrac{1}{\varepsilon}F'(\fhi)\fhi_t+\gamma\fhi_t.
\end{equation}
Substituting the term $\varepsilon\Delta\fhi$ in \eqref{relen3} through relation \eqref{eqfhigamma} and assuming a constant specific heat $c_V$, that is, $Q'(\theta)=c_V$ constant, we obtain the temperature equation \eqref{eqtemp}.

\subsection{Nutrient equation}
The nutrient balance equation is postulated in the form
\begin{eqnarray} \label{nutrbal}
\sigma_t = -\dive \mathbf{J}_\sigma-\mathcal{S},
\end{eqnarray}
where $J_\sigma$ is the nutrient flux and $\mathcal{S}$ denotes a source/sink term for the nutrient. Making the following constitutive choices, namely
\begin{align}
&\mathbf{J}_\sigma = -\nabla\sigma\\ \label{sourcenutr}
&\SS=\CC\sigma h(\fhi)-\BB(\sigma_B-\sigma),
\end{align}
we recover \eqref{eqnutr}.
{Here, we recall that $\CC>0$ corresponds to the nutrient consumption rate, $\BB>0$ is the nutrient supply rate while $\sigma_B$ is the vascular nutrient concentration. For the sake of simplicity, the nutrient diffusion constant has been normalized.}

Hence, the sink/source term in the nutrient equation is assumed to be regulated by both consumption of nutrients and nutrient supply via capillary network. In particular, we here consider the case where the nutrient supply is linked to the fact that the tumor has its own vasculature, according to the latter term in \eqref{sourcenutr}. Therefore, the term $\sigma_B$ acts as a threshold, indicating whether the nutrient is drawn to the tumor or transported in the other direction.


\section{Well-posedness}\label{sec-wp}
Having derived our model, we can now make precise our assumptions and subsequently our notation.

\subsection{Assumptions and notation}\label{asssec}
We consider the double well potential $F(\ph)=\ph^2(1-\ph)^2$ so that $F'(\ph)=4\ph^3-6\ph^2+2\ph$.
Concerning $h$, we assume $h\in C^1(\R)$ to be a monotone increasing function.
{We further suppose that $h(0)=0$ 
and
\begin{equation}
\label{acca}
|h(r)|+|h'(r)|\leq C,\quad r\in \R,
\end{equation}
for some positive constant $C$.
Notice that $h(0)=0$ complies with the biological meaning of the corresponding term in the tumor and nutrient equations where it renders
the tumor regulation of proliferation and apoptosis, and the nutrient uptaken by the tumor, respectively.\\

\noindent We suppose that the heat conductivity takes the form
\begin{equation}\label{kappa}
\kappa(r)=1+r^q,\quad r\geq 0,\quad \text{for some}\quad q\geq 2.
\end{equation}
This choice of $q$ mainly depends on mathematical reasons and is related to a better integrability for $\theta$.
Since in principle the sign of the variable $\theta$ is unknown, we extend $\kappa$ to the whole real line as $\kappa(r)=1+|r|^q$.
In the end, the modulus would be useless since the solution to the corresponding problem is nonnegative (actually $\theta\geq \underline\theta>0$, provided we properly modify the assumption  on the initial datum, as we explain in Remark \ref{segnormk}).
Besides, we suppose the coefficient $\sigma_B\in [0,1]$ and recall that
$\PP,\AA,\CC,\BB>0$ are constant as well.
Finally, for the sake of simplicity, we fix
$c_V=1$ and $\beta=\varepsilon=1$
in system \eqref{eqfhi}--\eqref{eqnutr}. Then the model reads

\begin{equation}
\label{system}
\begin{cases}
\ph_t-\Delta\ph+F'(\ph)-\theta=(\PP \sigma-\AA) h(\ph)
& \quad\text{in}\quad \Omega\times (0,T)
\vspace{2mm}\\
\theta_t-\dive[\kappa(\theta)\nabla\theta]-\ph^2_t+\theta \ph_t=0
& \quad\text{in}\quad \Omega\times (0,T)\vspace{2mm}\\
\sigma_t-\Delta\sigma=-\mathcal C\sigma h(\ph)+\mathcal B(\sigma_B-\sigma) & \quad\text{in}\quad \Omega\times (0,T)\vspace{2mm}\\
\end{cases}
\end{equation}
supplemented with \eqref{bc} and \eqref{ic}.
We recall that $\Omega$ is a smooth and bounded domain of $\R^3$ with boundary $\Gamma$, while
$(0,T)$ represents an assigned but otherwise arbitrary time interval. Then, we define the parabolic domain $Q_T:= \Omega \times (0,T)$.\\
We set $H:=L^2(\Omega)$ and $V:=H^1(\Omega)$, using these symbols also referring to vector valued functions.
In general, given a Banach space $X$, $\|\cdot\|_X$ represents its norm, using for brevity  $\|\cdot\|$  in place of $\|\cdot\|_H$, and
$\|\cdot\|_*$ for the norm of $V'$, dual space to $V$.
The symbol
$\langle \cdot,\cdot \rangle$ will stand for the duality pairing between $V'$ and $V$.
We also omit the variables of integrations when a misinterpretation may not occur.\\
Finally, $c$ stands for a positive constant, possibly changing from line to line: it is in general structural, further dependencies being pointed out on occurrence.

\smallskip\noindent
In order to make precise the assumptions on the initial data,
we define
\begin{equation} \label{kappone}
K(r) := \int_0^r \kappa(s) ds=r+\dfrac{r|r|^q}{q+1},  \quad r\in\mathbb{R},
\end{equation}
where $\kappa(r)=1+|r|^q,$
for some $q\geq 2,\ r\in\mathbb{R}$ (cf. \eqref{kappa}).
Indeed, we suppose that
\begin{align}
\label{initialtemp}
& \theta_0\in L^{3q}(\Omega)\cap V \quad \textrm{s.t.}\quad K(\theta_0)\in V \quad \textrm{and}\quad\theta_0\geq 0\quad \text{a.e. in}\quad \Omega,\\
\label{initialfhi}
& \ph_0\in W^{\frac 53,6}(\Omega)
,\quad \ph_0\geq 0\quad \text{a.e. in}\,\, \Omega, \quad \text{and}\quad \pn \fhi_0=0 \quad \text{on}\quad\partial\Omega\\
\label{initialnutr}
&\sigma_0\in L^\infty(\Omega),\quad 0\leq \sigma_0\leq 1 \quad \text{a.e. in}\quad \Omega.
\end{align}

\noindent
For further reference, we notice in particular that
\begin{equation}\label{K11}
\int_{\Omega} \kappa(\theta)^2 \vert \nabla \theta \vert^2 \, {\rm dx} = \norm{\nabla K(\theta)}^2,
\end{equation}
whenever it makes sense.
We also recall the nonlinear Poincar\'e inequality (see \cite{GMRS})
\begin{equation}
\label{nlPoi}
\||v|^{p/2}\|^2_V\leq c_p \left( \|v||^p_{L^1}+\|\nabla |v|^{p/2}\|^2\right),\quad  \forall v\in L^1(\Omega)\quad \text{s.t.}\quad \nabla |v|^{p/2}\in L^2(\Omega),
\end{equation}
holding for any finite $p\geq 2$.
Besides, without mentioning it, we will repeatedly use the following equation
\begin{equation}
\label{nablone}
\into |v|^r |\nabla v|^2=\dfrac 4{(r+2)^2}\into |\nabla ( |v|^{\frac r2+1})|^2.
\end{equation}

\subsection{Local in time existence of a solution}\label{seclocal}
We can now state the existence and uniqueness of a local solution that we will prove via Schauder fixed point theorem
in several steps. Namely,
\begin{thm}
\label{localthm}
For any $R>0$ there exists $T=T(R)>0$ such that if $(\theta_0,\ph_0,\sigma_0)$ satisfies assumptions \eqref{initialtemp}--\eqref{initialnutr}
and
\begin{equation}
\label{rbound}
\|\theta_0\|_{V\cap L^{3q}(\Omega)}+\|K(\theta_0)\|_V+\|\ph_0\|_{W^{\frac 53, 6}(\Omega)}\leq R,
\end{equation}
then there exists a unique local solution $(\theta,\ph,\sigma)$ to system \eqref{system} in $(0,T)$, namely,
\begin{align*}
&\theta\in C([0,T];H)\cap L^\infty(0,T;L^{3q}(\Omega)\cap V)\cap L^{4q}(0,T;L^{12q}(\Omega))\cap H^1(0,T;H)\\
&K(\theta)\in L^\infty(0,T;V)\cap L^2(0,T ;H^2(\Omega))\cap H^1(0,T;L^{\frac 32}(\Omega))\\
& \fhi\in C([0,T];V)\cap W^{1,4}(0,T;L^{6}(\Omega))\\
& \sigma \in L^\infty(0,T;L^\infty(\Omega))\cap L^2(0,T;V)\cap H^1(0,T;V')\\
& \theta_t -\Delta K(\theta) =\fhi_t^2 - \fhi_t\theta,\quad  \text{a.e. in}\quad  Q_T\\
&\ph_t-\Delta\ph+F'(\ph)=\theta+(\PP \sigma-\AA) h(\ph), \quad \textrm{a.e. in}\quad  Q_T\\
\label{eqfrnutr}
&\langle \sigma_t,v\rangle+\io  \nabla \sigma\cdot\nabla v+\mathcal B \io \sigma v=\io(-\mathcal C\sigma h(\ph)+\mathcal B \sigma_B)v, \quad \forall v\in V,\,\textrm{a.e. in}\quad (0,T)\\ \nonumber
&\pn \ph=0 \quad \textrm{and}\quad \pn K(\theta)=0, \quad \textrm{a.e. in }\quad \partial \Omega \times (0,T)\\ \nonumber
&\theta(0)=\theta_0,\quad \ph(0)=\ph_0\quad \sigma(0)=\sigma_0,\quad \textrm{a.e. in } \Omega.
\end{align*}

\noindent Besides,
$$
\theta\geq 0,\quad 0\leq   \sigma\leq 1 \quad \text{and}\quad \ph\geq 0\quad \text{a.e. in}\quad Q_T.
$$
\end{thm}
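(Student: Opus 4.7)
The plan is to prove existence via Schauder's fixed point theorem, following the decoupling strategy advertised in the introduction. We define a map $\mathcal F:\tilde\theta\mapsto\theta$ on a closed convex bounded subset $\mathcal K$ of $C([0,T];H)$, chosen as a ball in $L^\infty(0,T;L^{3q}(\Omega)\cap V)$ of radius $M$ slightly larger than the datum bound $R$ and additionally subject to $\|K(\tilde\theta)\|_{L^\infty(0,T;V)}\leq M$. Given $\tilde\theta\in\mathcal K$, we first solve (Lemma \ref{lemmino1}) the semilinear ``lower'' subsystem for $(\ph,\sigma)$ obtained by replacing $\theta$ by $\tilde\theta$ in the right-hand side of \eqref{eqfhi}, and then (Lemma \ref{lemmino2}) use the resulting $\ph$ to solve the quasilinear temperature equation for $\theta=\mathcal F(\tilde\theta)$; any fixed point of $\mathcal F$ is then a local solution of the full system.

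For the first frozen subproblem a Faedo-Galerkin approximation in the Neumann Laplacian eigenbasis works. Energy estimates for $\ph$ (test by $\ph_t$ and, exploiting $\ph_0\in W^{5/3,6}(\Omega)$, by $-\Delta\ph_t$ and by suitable higher-order quantities), combined with the polynomial character of $F'$ and the boundedness \eqref{acca} of $h$, deliver $\ph\in C([0,T];V)\cap W^{1,4}(0,T;L^6(\Omega))$. The nutrient equation is linear with bounded coefficients, so a maximum-principle argument comparing $\sigma$ with the constants $0$ and $1$ gives $0\leq\sigma\leq 1$ and the rest of the claimed regularity; $\ph\geq 0$ follows by testing by $\ph^-$ and invoking $h(0)=0$. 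For the second frozen subproblem, again via Galerkin, one tests the temperature equation successively by $1$, by $|\theta|^{3q-2}\theta$ and by $K(\theta)$: in combination with the nonlinear Poincaré inequality \eqref{nlPoi}, the identities \eqref{K11} and \eqref{nablone}, and Sobolev embeddings in three space dimensions, this yields all the estimates on $\theta$ and $K(\theta)$ listed in Theorem \ref{localthm}, with the source $\ph_t^2-\theta\ph_t$ controlled by the bounds on $\ph$ already obtained. Positivity $\theta\geq 0$ comes from $\ph_t^2\geq 0$ plus a test by $\theta^-$.

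To apply Schauder it remains to check three items. First, $\mathcal F(\mathcal K)\subset\mathcal K$ for $T=T(R)$ sufficiently small: the a priori constants depend continuously on $T$ and can be absorbed for short times, so the chosen $M$ is preserved. Second, $\mathcal F(\mathcal K)$ is relatively compact in $C([0,T];H)$: this follows from $\theta\in H^1(0,T;H)\cap L^\infty(0,T;V)$ and the Aubin-Lions lemma. Third, $\mathcal F$ is continuous in the $C([0,T];H)$ topology, which is a standard passage to the limit along $\tilde\theta_n\to\tilde\theta$ using the uniform estimates and the same compactness in each frozen problem. Uniqueness is obtained by taking differences of two solutions, testing each equation in a suitable norm and applying Grönwall; the only delicate piece is the diffusive flux $\nabla K(\theta_1)-\nabla K(\theta_2)$ in the temperature equation, which is handled by the monotonicity of $K$ combined with the $L^\infty_tL^{3q}_x$ control on both solutions.

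The main obstacle is striking the right balance between the regularities of $\theta$ and of $\ph$: the source $\ph_t^2-\theta\ph_t$ must belong to a space that the quasilinear parabolic theory for the $\theta$-equation can absorb, while simultaneously $\theta$ must lie in a space in which the Allen-Cahn equation yields exactly the regularity for $\ph$ (and in particular for $\ph_t$) assumed at the outset. The somewhat intricate functional setting in \eqref{initialtemp}--\eqref{initialfhi}---the exponents $3q$, $4q$ and $12q$, together with the Besov-type regularity $W^{5/3,6}$ for $\ph_0$---is tuned precisely so that this bootstrap closes in three spatial dimensions, thereby allowing the choice of $\mathcal K$ above to be preserved by $\mathcal F$.
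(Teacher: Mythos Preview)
Your overall architecture---decouple, solve two frozen subproblems, close with Schauder---matches the paper's. The genuine gap is in the choice of the fixed-point set $\mathcal K$. The paper does \emph{not} work in $C([0,T];H)$ with $L^\infty$-in-time constraints, but in the ball
\[
\Theta_R(T)=\bigl\{\tilde\theta\in L^4(0,T;L^6(\Omega)):\ \tilde\theta\geq 0\ \text{a.e.},\ \|\tilde\theta\|_{L^4(0,T;L^6)}\leq R\bigr\},
\]
and this is not cosmetic. First, your $\mathcal K$ carries the nonlinear constraint $\|K(\tilde\theta)\|_{L^\infty(0,T;V)}\leq M$, whose convexity in $\tilde\theta$ you have not justified (and which is unnecessary: only $\tilde\theta\in L^4(0,T;L^6)$ enters the $(\ph,\sigma)$ subproblem). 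Second, you omit the constraint $\tilde\theta\geq 0$; this is used in Lemma~\ref{lemmino1} to get $\ph\geq 0$ (the term $-\int_\Omega\tilde\theta\,\ph_-$ must have a sign), and $\ph\geq 0$ is then needed both for the Moser iteration yielding $\ph\in L^\infty(Q_T)$ (one tests by $\ph^p$) and for $\sigma\leq 1$. Third, with an $L^\infty$-in-time ball there is no small factor shrinking the radius; the paper's device is the elementary bound $\|\theta\|_{L^4(0,T;L^6)}\leq T^{1/4}\|\theta\|_{L^\infty(0,T;L^{3q})}$, which forces $\mathcal T(\Theta_R(T))\subset\Theta_R(T)$ for small $T$ regardless of the size of the a~priori constants. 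Compactness is then obtained from $H^1(0,T;H)\cap L^4(0,T;W^{1,3}(\Omega))\Subset L^4(0,T;L^6(\Omega))$.

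Two smaller points. Testing the temperature equation by $K(\theta)$ gives only $K(\theta)\in L^2(0,T;V)$; the $L^\infty(0,T;V)$ bound and the control of $\theta_t$ come from testing by $\partial_t K(\theta)=\kappa(\theta)\theta_t$, producing $\|\sqrt{\kappa(\theta)}\,\theta_t\|^2+\tfrac12\frac{d}{dt}\|\nabla K(\theta)\|^2$ on the left. And $\ph_t\in L^4(0,T;L^6(\Omega))$ is not obtained by testing with $-\Delta\ph_t$: the paper first gets $\ph\in L^\infty(Q_T)$ via Moser, then writes $\ph_t-\Delta\ph=\tilde f\in L^4(0,T;L^6)$ and invokes maximal $L^p$-regularity, for which $\ph_0\in W^{5/3,6}(\Omega)$ is exactly the required trace class. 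Your uniqueness sketch is essentially right; the paper makes it precise by testing the temperature-difference equation with $\mathcal N\Theta=(-\Delta+I)^{-1}(\theta_1-\theta_2)$ and running Gr\"onwall on $\|\theta\|_*^2+\|\ph\|^2+\tfrac12\|\nabla\ph\|^2+\|\sigma\|^2$.
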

The strategy used to prove the result above is to decouple the problem, first freezing the temperature, then the order parameter and the nutrient concentration, with suitable regularities. Hence, we gain the well-posedness of the fixed point map.\\
For any fixed $R, T>0$, we introduce the space for the fixed point argument as
\begin{equation}
\label{tetaspace}
{\Theta}_R(T):=\lbrace
\tilde\theta\in L^4(0,T;L^6(\Omega)) \textrm{ s.t. }\tilde\theta\geq 0\, \textrm{ a.e. in }\, Q_T \,\textrm{and}\,
\|\tilde\theta\|_{L^4(0,T;L^6(\Omega)) }\leq R
\rbrace.
\end{equation}
\noindent The first step to prove Theorem \ref{localthm} is gaining the following result

\begin{lem}
\label{lemmino1}
Let assumptions \eqref{initialfhi} and \eqref{initialnutr} hold true. Given $R,T>0$, fix $\tilde\theta\in {\Theta}_R(T)$.
Then there exists a unique pair $(\ph,\sigma)$ such that
\begin{small}
\begin{align}\nonumber
&\ph\in C([0,T];V)\cap W^{1,4}(0,T;L^{6}(\Omega)),\quad \sigma \in L^\infty(0,T;L^\infty(\Omega))\cap L^2(0,T;V)\cap H^1(0,T;V')\\ \label{eqfrfhi}
&\ph_t-\Delta\ph+F'(\ph)=\tilde\theta+(\PP \sigma-\AA) h(\ph), \quad \textrm{a.e. in}\quad  Q_T\\ \label{eqfrnutr}
&\langle \sigma_t,v\rangle+\io \nabla \sigma\cdot \nabla v+\mathcal B\io \sigma v=\io(-\mathcal C\sigma h(\ph)+\mathcal B \sigma_B)v, \quad \forall v\in V,\,\textrm{a.e. in}\quad (0,T)\\ \nonumber
&\pn \ph=0, \quad \textrm{a.e. in }\quad \partial \Omega \times (0,T)\\ \nonumber
&\ph(0)=\ph_0\quad \sigma(0)=\sigma_0,\quad \textrm{a.e. in } \Omega.
\end{align}
\end{small}

\noindent Moreover,
$$
0\leq   \sigma\leq 1\quad \text{and}\quad \ph\geq 0\quad \text{a.e. in}\quad Q_T.
$$
In particular,
$$\|\ph\|_{L^\infty(0,T;L^\infty(\Omega))}+\|\fhi_t\|_{ L^4(0,T;L^6(\Omega))}\leq m_0(R) m_1(T),$$
where $m_i>0$ are monotone increasing functions.
\end{lem}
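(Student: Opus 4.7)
The plan is to build $(\ph,\sigma)$ via a joint Faedo-Galerkin scheme applied to both equations, then close the proof by a combination of Stampacchia-type truncations (for the pointwise bounds) and parabolic maximal regularity (for the quantitative $L^\infty$ and $W^{1,4}(L^6)$ estimates on $\ph$). Let $\{w_j\}_{j\ge 1}$ denote the $L^2(\Omega)$-orthonormal basis of eigenfunctions of the Neumann Laplacian, set $V_n := \mathrm{span}\{w_1,\dots,w_n\}$, and look for $(\ph_n,\sigma_n)\in V_n\times V_n$ solving the Galerkin projection of \eqref{eqfrfhi}--\eqref{eqfrnutr}. Since $h,F'\in C^1$ and $\tilde\theta\in L^4(0,T;L^6(\Omega))$, this is a locally Lipschitz ODE system and local existence is standard.

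I then derive a priori estimates uniform in $n$. Testing the nutrient equation by $\sigma_n$ yields, thanks to $|h|\le C$, a bound in $L^\infty(H)\cap L^2(V)$ depending linearly on $\|\sigma_0\|$, $\sigma_B$ and $T$. Testing the phase-field equation first by $\ph_n$ and then by $\ph_{n,t}$, and exploiting $F\ge 0$ together with Young's inequality on $\tilde\theta\,\ph_{n,t}$ (affordable because $\tilde\theta\in L^2(Q_T)$ with norm controlled by $T^{1/4}R$) gives $\ph_n\in L^\infty(V)\cap H^1(H)$ uniformly. These bounds extend the Galerkin solution to $[0,T]$, and Aubin-Lions compactness yields a limit $(\ph,\sigma)$ solving the system. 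For the pointwise bounds I work on the limit equations: testing the nutrient equation by $(\sigma-1)^+$ and $-\sigma^-$ and using $h(\ph)\ge 0$ (proved in parallel below) together with $\sigma_B,\sigma_0\in[0,1]$, a Gronwall argument gives $0\le\sigma\le 1$. Testing the $\ph$-equation by $-\ph^-$: the Laplacian produces $\|\nabla\ph^-\|^2$, the contribution of $F'$ on $\{\ph<0\}$ is nonnegative, the term $-\tilde\theta\ph^-$ has favorable sign since $\tilde\theta\ge 0$, and $|h(\ph)(-\ph^-)|\le C(\ph^-)^2$ follows from $h(0)=0$ and $|h'|\le C$; hence $\|\ph^-\|^2$ satisfies a Gronwall inequality with $\ph_0^-=0$, giving $\ph\ge 0$.

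The main technical step is the quantitative estimate on $\|\ph\|_{L^\infty L^\infty}+\|\ph_t\|_{L^4 L^6}$. Having $\ph\ge 0$ and $0\le\sigma\le 1$, the equation rewrites as
\begin{equation*}
\ph_t-\Delta\ph+4\ph^3=6\ph^2-2\ph+\tilde\theta+(\PP\sigma-\AA)h(\ph),
\end{equation*}
where the cubic damping on the left provides absorption for the lower-order polynomial terms on the right. I then perform a Moser-type iteration, testing by $\ph^{p-1}$ for $p\in[2,\infty)$: Young's inequality absorbs $6\ph^{p+1}$ into $4\ph^{p+2}$, the bounded source $(\PP\sigma-\AA)h(\ph)$ contributes only a $\|\ph\|_{L^p}^{p-1}$ term, while H\"older against $\tilde\theta\in L^4(L^6)$ yields a contribution controlled by $\|\tilde\theta\|_{L^4(L^6)}$ times $\|\ph\|_{L^{q}}^{p-1}$ for a controllable $q$. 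Iterating and sending $p\to\infty$ produces the bound $\|\ph\|_{L^\infty L^\infty}\le m_0(R)m_1(T)$. With this $L^\infty$ bound in hand, the right-hand side of the $\ph$-equation sits in $L^4(0,T;L^6(\Omega))$, and parabolic $L^p$-maximal regularity for the Neumann heat operator (with the assumption $\ph_0\in W^{5/3,6}(\Omega)$ being exactly the trace space of $W^{1,4}(0,T;L^6)\cap L^4(0,T;W^{2,6})$) delivers the required $\ph_t\in L^4(0,T;L^6(\Omega))$ bound. Uniqueness is a routine Gronwall argument on the difference of two solutions, using the $L^\infty$ control just obtained to handle $F'$ and $h$ as Lipschitz maps on the relevant bounded range.

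The principal obstacle is the $L^\infty(L^\infty)$ bound on $\ph$: one must quantify how the cubic damping in $F'$ controls the polynomial right-hand side uniformly in $p$ while simultaneously keeping the dependence on the forcing linear in $\|\tilde\theta\|_{L^4(L^6)}\le R$, so that the final constants factor as $m_0(R)m_1(T)$ with $m_1$ vanishing at $T=0$—this is what eventually makes the Schauder fixed-point map a self-map on $\Theta_R(T)$ for small $T$.
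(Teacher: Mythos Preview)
Your approach is essentially the same as the paper's: Galerkin approximation with energy estimates, Stampacchia truncations for the pointwise bounds, a Moser iteration for $\|\ph\|_{L^\infty(L^\infty)}$, and parabolic maximal regularity to upgrade to $\ph_t\in L^4(0,T;L^6(\Omega))$. The paper tests by $\ph$ and $-\Delta\ph$ rather than by $\ph$ and $\ph_t$, but this is immaterial.

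One point to tighten: the order of the pointwise bounds is not symmetric, and your phrase ``$h(\ph)\ge 0$ (proved in parallel below)'' hides a circularity. To get $\sigma\le 1$ from the $(\sigma-1)^+$ test you genuinely need $h(\ph)\ge 0$, i.e.\ $\ph\ge 0$; but in your $-\ph^-$ test you write only $|h(\ph)(-\ph^-)|\le C(\ph^-)^2$, omitting the factor $(\PP\sigma-\AA)$, which is not yet known to be bounded. The paper breaks the loop by first invoking \cite[Theorem~7.1]{LSU} to obtain $\sigma\in L^\infty(Q_T)$ (no sign information required), then proving $\sigma\ge 0$, then $\ph\ge 0$ using the $L^\infty$ bound on $\sigma$, and only then $\sigma\le 1$. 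Alternatively one can bypass LSU and control $\int(\PP\sigma-\AA)h(\ph)\ph^-$ using only $\sigma\in L^2(0,T;V)$ via interpolation, but this needs to be said. Finally, the claim that ``$m_1$ vanishes at $T=0$'' is not what the paper uses (and is not obviously true for the $\ph_t$ bound, which carries the fixed trace contribution from $\ph_0$); the self-mapping of $\Theta_R(T)$ in the Schauder argument comes instead from the explicit factor $T^{1/4}$ when passing from $L^\infty(0,T;L^{3q})$ to $L^4(0,T;L^6)$ for $\theta$.
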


\begin{proof}
Assuming the existence of a pair $(\ph,\sigma)$ meeting the above requirement its uniqueness is straightforward.
We will argue formally, although the Galerkin method allows to make rigorous the next computations (see, e.g. \cite{AIMS}).
Indeed, testing \eqref{eqfrfhi} by $\ph$ and $-\Delta\ph$ in $H$ and \eqref{eqfrnutr} by $\sigma$ in $H$, owing to
\eqref{acca}, we easily see that
$$
\|\ph\|^2_{L^\infty(0,T;V)\cap L^2(0,T;H^2(\Omega))}+
\|\sigma\|^2_{L^\infty(0,T;H)\cap L^2(0,T;V)}\leq m_0(R)m_1(T),
$$
where, for $i=0,1$, $m_i$ is a positive and increasing function which in this proof may change from line to line.
Besides, by comparison in the equations, we further infer
$$
\|\ph_t\|^2_{L^2(0,T;H)}+\|\sigma_t\|^2_{L^2(0,T;V')}\leq m_0(R)m_1(T)
$$
Then the local (and actually global) existence of a pair $(\ph,\sigma)\in C([0,T],V)\times C([0,T],H)$
can be deduced by the Galerkin method.

Moreover, notice that $\sigma$ is a weak solution to
$$
\sigma_t-\Delta \sigma=\BB(\sigma_B-\sigma)-\CC \sigma h(\ph)\in L^\infty(0,T;H)
$$
with $\sigma_0\in L^\infty(\Omega)$. Applying classical regularity results \cite[Theorem 7.1 p. 181]{LSU},  we can see that $\sigma\in L^\infty(Q_T)$.
At this point, we can also prove that $\sigma\geq 0$ a.e. in $Q_T$.
Indeed, testing \eqref{eqfrnutr} by $-\sigma_{-}$, where $\sigma_{-}\geq 0$ is the negative part of $\sigma$, we infer
\begin{equation}
\dfrac{1}{2}\dfrac{d}{dt}\Vert \sigma_-\Vert^2+\Vert \nabla \sigma_-\Vert^2\leq c\Vert \sigma_-\Vert^2.
\end{equation}
By Gronwall's Lemma, the assumption $\sigma_0\geq 0$ entails $\sigma(x,t)\geq 0$.
Recalling $\sigma \in L^\infty(Q_T)$, we learn
$$
0\leq \sigma\leq \overline \sigma_T,\quad \text{a.e. in}\quad Q_T,
$$
for some $\overline\sigma_T\geq 1$.

\medskip\noindent
We now turn to the lower bound for the order parameter,  testing  the corresponding equation \eqref{eqfrfhi} by $-\fhi_-$, where $\fhi_{-}\geq 0$ is the negative part of $\fhi$.
Therefore we get
\begin{align}
&\dfrac{1}{2}\dfrac{d}{dt}\Vert \fhi_-\Vert^2+\Vert \nabla \fhi_-\Vert^2 +4\Vert \fhi_-\Vert_{L^4(\Omega)}^4+ 6\Vert \fhi_-\Vert_{L^3(\Omega)}^3+2\Vert \fhi_-\Vert^2\nonumber\\
&=-\int_\Omega\tilde{\theta}\fhi_- - \int_\Omega (\PP \sigma-\AA) h(\ph)\fhi_-.
\end{align}
Owing to $h(0)=0$ and the Lipschitz continuity in \eqref{acca}, we estimate the last term on the right hand side as
\begin{align*}
- \int_\Omega (\PP \sigma-\AA) h(\ph)\fhi_- \leq \|\PP \sigma-\AA\|_{L^\infty(\Omega)}\into| h(\ph)|\ph_-
\leq c \into |\ph|\ph_-=c\|\ph_-\|^2
\end{align*}
for some $c=c_T$ since we exploit $0\leq \sigma\leq \bar \sigma_T$ a.e. in $Q_T$.
Then, being by assumption $\tilde\theta\geq 0$, the differential inequality reduces to
\begin{align*}
\dfrac{1}{2}\dfrac{d}{dt}\Vert \fhi_-\Vert^2+\Vert \nabla \fhi_-\Vert^2 +4\Vert \fhi_-\Vert_{L^4(\Omega)}^4+ 6\Vert \fhi_-\Vert_{L^3(\Omega)}^3+2\Vert \fhi_-\Vert^2\leq c\|\ph_-\|^2 .
\end{align*}
Therefore, by Gronwall's Lemma and $\ph_0\geq 0$ a.e. in $\Omega$, we get that $\ph\geq 0$ a.e. in $Q_T$.\\

Going back to the nutrient's equation, we can now show that $\sigma\leq 1$ a.e. in $Q_T$. Indeed, having learned the sign of the order parameter, testing \eqref{eqfrnutr}
by $(\sigma-1)_+$ we have
\begin{align*}
& \dfrac 12 \ddt \|(\sigma-1)_+\|^2+\|\nabla(\sigma-1)_+\|^2+\mathcal B\|(\sigma-1)_+\|^2\\
&=
\mathcal B(\sigma_B-1)\into(\sigma-1)_+-\mathcal C\into h(\ph)\sigma (\sigma-1)_+\leq 0,
\end{align*}
since $\sigma_B\leq 1$. On account of the initial condition, we deduce $\sigma\leq 1$ a.e. in $Q_T$.

\medskip\noindent

An upper bound for the $L^\infty$-norm of the order  parameter $\fhi$ will also be available, as we prove by Moser's method.\\
Indeed, assumption \eqref{initialfhi}, in particular, provides
$$\fhi_0\in L^p(\Omega), \qquad p\in [1,+\infty].$$
Then, according to \eqref{eqfrfhi}, $\fhi$ solves
\begin{equation}\label{eqfhig}
\fhi_t-\Delta \fhi+4\fhi^3-6\fhi^2+2\fhi=g,
\end{equation}
where $g:= \tilde{\theta}+(\PP\sigma-\AA)h(\fhi)\in L^4(0,T;L^6(\Omega))$.
Testing \eqref{eqfhig} by $\fhi^p$, we obtain
\begin{align}\label{eqfhimoser}
\dfrac{1}{p+1}\dfrac{d}{dt}\int_\Omega \fhi^{p+1}-\io(\Delta\fhi)\fhi^p+4\io \fhi^{p+3}+2\io \fhi^{p+1}=6\io \fhi^{p+2}+\io g\fhi^p.
\end{align}
Since
\begin{align*}
-\io(\Delta\fhi)\fhi^p=\dfrac{4p}{(p+1)^2}\io \left|\nabla\left(\fhi^{\frac{p+1}{2}}\right)\right|^2,
\end{align*}
multiplying \eqref{eqfhimoser}  by $(p+1)$, we infer
\begin{align*}\nonumber
&\dfrac{d}{dt}\int_\Omega \fhi^{p+1}+
\dfrac{4p}{p+1}\io \left|\nabla\left(\fhi^{\frac{p+1}{2}}\right)\right|^2+4(p+1)\io\fhi^{p+3}+2(p+1)\io\fhi^{p+1}\\
&\quad=6(p+1)\io \fhi^{p+2}+(p+1)\io g\fhi^p.
\end{align*}
The second term on the right hand side might be read as
\begin{equation*}
(p+1)\io g\fhi^p= (p+1)\io g |\fhi|^\frac{p+1}{2}|\fhi|^\frac{p-1}{2}.
\end{equation*}
Then, using H\"older's inequality with exponents $6,6$ and $\frac32$ and the Sobolev embedding $W^{1,2}(\Omega)\hookrightarrow L^6(\Omega)$, we have
\begin{align*}
(p+1)\io g\fhi^p &\leq c(p+1)\Vert g\Vert_{L^6(\Omega)} \Vert \fhi^\frac{p+1}{2}\Vert_V\left(\io |\fhi|^{\frac{3}{4}(p-1)}\right)^\frac{2}{3}
\nonumber\\
&\leq
\dfrac{2p}{p+1}\Vert \fhi^\frac{p+1}{2}\Vert^2_V+c (p+1)^2\Vert g\Vert^2_{L^6(\Omega)}\left(\io |\fhi|^{\frac{3}{4}(p-1)}\right)^\frac{4}{3}\nonumber\\
& = \dfrac{2p}{p+1}\Vert \nabla(\fhi^\frac{p+1}{2})\Vert^2+\dfrac{2p}{p+1}\io  \ph^{p+1}
+c (p+1)^2\Vert g\Vert^2_{L^6(\Omega)}\left(\io |\fhi|^{\frac{3}{4}(p-1)}\right)^\frac{4}{3},
\end{align*}
where in the last line we have used once again Young's inequality. Here and throughout this proof, we emphasize that the positive constant $c$ is
independent of $p$.
Replacing these computations in the differential inequality, we obtain
\begin{align}\nonumber
&\dfrac{d}{dt}\int_\Omega \fhi^{p+1}+
\dfrac{2p}{p+1}\io \left|\nabla\left(\fhi^{\frac{p+1}{2}}\right)\right|^2+4(p+1)\io\fhi^{p+3}+\dfrac 2{p+1}\io \fhi^{p+1}\\
\label{ineqfhimoser}
&\quad\leq 6(p+1)\io \fhi^{p+2}
+c (p+1)^2\Vert g\Vert^2_{L^6(\Omega)}\left(\io |\fhi|^{\frac{3}{4}(p-1)}\right)^\frac{4}{3}.
\end{align}
Since the first term on the right hand side of \eqref{ineqfhimoser} can be estimated by the generalized Young's inequality as
\begin{align*}
6(p+1)\io \fhi^{p+2}\leq( p+1)\io \fhi^{p+3}+6^{p+3}(p+1) |\Omega|,
\end{align*}
we infer
\begin{align*}\nonumber
&\dfrac{d}{dt}\int_\Omega \fhi^{p+1}+
\dfrac{2p}{p+1}\io \left|\nabla\left(\fhi^{\frac{p+1}{2}}\right)\right|^2+3(p+1)\io\fhi^{p+3}\\
&\quad\leq c(p+1)6^{p+3}
+c (p+1)^2\Vert g\Vert^2_{L^6(\Omega)}\left(\io |\fhi|^{\frac{3}{4}(p+1)}\right)^\frac{4}{3}.
\end{align*}
Thus, integrating in time, it holds
\begin{align}
\label{moserfhitotal}
\int_\Omega \fhi^{p+1}(t)
&\leq \io \fhi_0^{p+1}+T
c(p+1)6^{p+3}
+c (p+1)^2\int_0^T\Vert g(s)\Vert^2_{L^6(\Omega)}\left(\io |\fhi(s)|^{\frac{3}{4}(p+1)}\right)^\frac{4}{3}ds
\end{align}
In order to apply Moser’s iteration scheme, let us consider the sequence $(p_k)_k$ of real numbers defined as
$$p_0=3, \qquad p_{k+1}=\dfrac{4}{3}p_k, \qquad k\in\mathbb{N}.$$
Choosing $p=p_{k+1}-1$ in \eqref{moserfhitotal}, we obtain
\begin{align*}
\int_\Omega \fhi^{p_{k+1}}(t)
\leq c\left(M^{p_{k+1}}+p^2_{k+1}+p^2_{k+1}\sup_{[0,T]}\left(\io\fhi^{p_k}\right)^{\frac{4}{3}}\right),
\end{align*}
for some constant $M$ depending on $T$, $\|\ph_0\|_{V\cap L^\infty(\Omega)}$ and $\|g\|_{L^2(0,T;L^6(\Omega))}$, but independent of $k$.
Hence, it follows
$$
\sup_{[0,T]}\int_\Omega \fhi^{p_{k+1}}(t)
\leq c p^2_{k+1} \max\left\lbrace M^{p_{k+1}}, \sup_{[0,T]}\left(\io\fhi^{p_k}\right)^{\frac{4}{3}}\right\rbrace,
$$
so we can conclude that $\ph\in L^\infty(0,T;L^\infty(\Omega))$ (see, e.g., \cite{Laurencot}) with
$$
\|\ph\|_{L^\infty(0,T;L^\infty(\Omega))}\leq m_0(R) m_1(T).
$$
Thus, having written \eqref{eqfrfhi}
as
$$
\ph_t-\Delta\ph=\tilde f,\quad \text{where}\quad \tilde f:=-F'(\ph)+\tilde\theta+(\PP \sigma-\AA) h(\ph)\in L^4(0,T;L^6(\Omega)),
$$
thanks to the regularity of the initial datum,
we deduce that $\ph_t,\Delta\ph\in  L^4(0,T;L^6(\Omega))$ (see the subsequent Remark) and

$$
\|\ph_t\|_{L^4(0,T;L^6(\Omega))}\leq c(\|\ph_0\|_{W^{\frac 53,6}(\Omega)}+\|\tilde f\|_{L^4(0,T;L^6(\Omega))})\leq m_0(R) m_1(T).
$$
\end{proof}
\begin{rmk}
The assumption \eqref{initialfhi}
on $\ph_0$ ensures the desired regularity on $\ph_t$ in Lemma \ref{lemmino1} as well as the related estimate that is crucial in the fixed point argument.
Indeed, assume that $(\ph,\sigma)$ is a solution to the problem corresponding to a fixed $\tilde\theta$: then, in particular,
$\ph$ solves
$$
\begin{cases}
\ph_t-\Delta\ph=\tilde f,& \quad \text{in}\quad \Omega\times (0,T)\\
\pn \ph=0,& \quad \text{on}\quad \Gamma\times (0,T)\\
\ph(0)=\ph_0,& \quad \text{in}\quad \Omega,
\end{cases}
$$
where $\tilde f=-F'(\ph)+\tilde\theta+(\PP \sigma-\AA) h(\ph)$.
On account of \cite[Theorem III.1]{vonwahl}, we observe that
$$
\ph_t\in L^4(0,T;L^6(\Omega)) \quad \text{and}\quad \|\ph_t-\psi_t\|_{L^4(0,T;L^6(\Omega))}\leq c(\|\ph_0\|_{L^6(\Omega)}+\|\tilde f\|_{L^4(0,T;L^6(\Omega))}),
$$
where $\psi$ is the solution to
$$
\begin{cases}
\psi_t-\Delta\psi=0,& \quad \text{in}\quad \Omega\times (0,T),\\
\pn \psi=0, & \quad \text{on}\quad \Gamma\times (0,T),\\
\psi(0)=\ph_0, & \quad \text{in}\quad \Omega.
\end{cases}
$$
Applying \cite[Theorem 9.1]{LSU} to the last system, we deduce that
$\psi\in W^{2,1}_{6}(Q_T)$ so that, in particular, $\psi_t\in L^{6}(Q_T)$
with $$\|\psi_t\|_{L^6(Q_T)}\leq c\|\ph_0\|_{W^{\frac 53,6}(\Omega)}.$$
Thus, we obtain the desired estimate
$$
\|\ph_t\|_{L^4(0,T;L^6(\Omega))}\leq c(\|\ph_0\|_{W^{\frac 53,6}(\Omega)}+\|\tilde f\|_{L^4(0,T;L^6(\Omega))}).
$$
\end{rmk}
%

%
%
%
%

\medskip\noindent
As a second step, we work on the temperature equation, freezing the order parameter and the nutrient concentration variables.\\ Therefore, given $R,T>0$, let us define the set

\begin{align*}
\mathcal X_R(T):=&\left\lbrace (\fhi, \sigma) \in [C([0,T];V)\cap W^{1,4}(0,T;L^{6}(\Omega))]\times L^\infty(0,T;L^\infty(\Omega)) ,\right.\\
&\left.\,\, \fhi\geq 0\, \text{and} \, 0\leq\sigma\leq 1 \,\textrm{ in } \, Q_T,\quad \|\ph_t\|_{L^4(0,T;L^6(\Omega))}\leq m_0(R)m_1(T)\right\rbrace.
\end{align*}
Notice in particular that for any $(\tilde \fhi,\tilde \sigma)\in \mathcal X_R(T)$, it holds  $\tilde{\ph}_t\in \mathcal H_\rho(T)$,
where
$$
\mathcal H_\rho(T):=\{ u\in L^4(0,T;L^6(\Omega)):\quad \|u\|_{L^4(0,T;L^6(\Omega))}\leq \rho\},
$$
being $\rho=m_0(R)m_1(T)$.

\medskip
\noindent Given $(\tilde \fhi,\tilde \sigma)\in \mathcal X_R(T)$, let $m:=\tilde{\ph}_t$ and study the problem
\begin{equation}
\theta_t-\dive[\kappa(\theta)\nabla\theta] =m^2-m\theta,
\label{eqfrtemp}
\end{equation}
complemented with the homogeneous Neumann boundary condition and the initial condition
$\theta(0)=\theta_0.$

\begin{lem}
\label{lemmino2}
Assume that \eqref{initialtemp} holds true.
For fixed $\rho,T>0$, let $m\in \mathcal H_\rho(T)$, then there exists a unique $\theta$ such that
\begin{align*}
&\theta\in C([0,T];H)\cap L^\infty(0,T;L^{3q}(\Omega)\cap V)\cap L^{4q}(0,T;L^{12q}(\Omega))\\
&\theta_t\in L^2(0,T;H)\\
&K(\theta)\in L^\infty(0,T;V)\cap L^2(0,T ;H^2(\Omega))\cap H^1(0,T;L^{\frac 32}(\Omega))\\
&\theta\geq 0, \quad \textrm{a.e. in}\quad Q_T
\end{align*}
which solves
\begin{equation}
\theta_t -\Delta K(\theta) = m^2 - m\theta,\quad  \text{a.e. in}\quad  Q_T
\label{eqfrtempm}
\end{equation}
with $ \pn K(\theta)=0$ a.e. on $\partial \Omega\times (0,T)$ and $\theta(0)=\theta_0,$ a.e. in $\Omega$.
Moreover
\begin{equation}
\label{emme23}
\|\theta\|_{L^4(0,T;L^6(\Omega))}\leq \mu_0(\rho) \mu_1(T),
\end{equation}
for some monotone increasing functions $\mu_i\geq 0$.
\end{lem}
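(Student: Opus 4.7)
The plan is a Faedo--Galerkin approximation, bolstered by a ladder of a priori estimates that exactly reproduce the asserted regularity, passage to the limit via the monotonicity of $K$, and a concluding energy-type uniqueness argument. The quantitative dependence on $(\rho,T)$ must be tracked throughout so that \eqref{emme23} emerges with the required form $\mu_0(\rho)\mu_1(T)$.

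I would use as basis the eigenfunctions of the Neumann Laplacian, extending $\kappa$ to $\R$ as $1+|r|^q$ to keep the Galerkin ODE well-posed; local solvability then follows from classical theory. The a priori estimates are carried out in a specific order. First, testing by $-\theta_{n,-}$ and controlling the resulting $\int|m|\theta_{n,-}^2$ via the Gagliardo--Nirenberg inequality and Young's inequality (so that $\|m\|_{L^6}^{4/3}\in L^3(0,T)$ drives Gronwall) forces $\theta_n\geq 0$. Second, with positivity in hand, I test by $\theta_n^{3q-1}$ and invoke \eqref{nablone} twice: the dissipation produces both $\|\nabla\theta_n^{3q/2}\|^2$ and $\|\nabla\theta_n^{2q}\|^2$, while the source $\int(m^2-m\theta_n)\theta_n^{3q-1}$ is handled by H\"older with $L^6$ on $m$ and Sobolev $V\hookrightarrow L^6$ on $\theta_n^{2q}$, closing into $\theta_n\in L^\infty(0,T;L^{3q}(\Omega))\cap L^{4q}(0,T;L^{12q}(\Omega))$. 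Third, testing by $\partial_t K(\theta_n)=\kappa(\theta_n)\theta_{n,t}$ gives $\tfrac12\frac{d}{dt}\|\nabla K(\theta_n)\|^2+\int\kappa(\theta_n)\theta_{n,t}^2$ on the left; half of the parabolic term absorbs the source after Cauchy--Schwarz, while the leftover pieces $\int m^4\theta_n^q$ and $\int m^2\theta_n^{q+2}$ are bounded through the previous step (for instance $\int m^4\theta_n^q\leq\|m\|_{L^6}^4\|\theta_n\|_{L^{3q}}^q\in L^1(0,T)$). This simultaneously produces $\theta_{n,t}\in L^2(0,T;H)$ and $K(\theta_n)\in L^\infty(0,T;V)$; elliptic regularity applied to $-\Delta K(\theta_n)=m^2-m\theta_n-\theta_{n,t}\in L^2(H)$ upgrades $K(\theta_n)$ to $L^2(0,T;H^2(\Omega))$, H\"older on $\kappa(\theta_n)\theta_{n,t}$ (with $\kappa(\theta_n)\in L^\infty(L^6)$, since $K(\theta_n)\in L^\infty(V)\hookrightarrow L^\infty(L^6)$) yields $K(\theta_n)\in H^1(0,T;L^{3/2}(\Omega))$, and $\theta\in L^\infty(V)$ is automatic from $\kappa\geq 1$ and $\nabla K(\theta_n)=\kappa(\theta_n)\nabla\theta_n$.

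Passage to the limit is standard: Aubin--Lions provides a.e. convergence of a subsequence, the nonlinear term $\Delta K(\theta_n)$ is identified by this a.e. convergence together with the uniform $L^2(H^2)$ bound, and the quadratic source passes by dominated convergence; the initial datum is preserved since $\theta_t\in L^2(H)$ implies $\theta\in C([0,T];H)$. For uniqueness, the difference $w=\theta_1-\theta_2$ satisfies $w_t-\Delta(K(\theta_1)-K(\theta_2))=-mw$ in the strong sense thanks to the regularity above; testing by $w$ and splitting $\kappa(\theta_1)\nabla\theta_1-\kappa(\theta_2)\nabla\theta_2=\kappa(\theta_1)\nabla w+(\kappa(\theta_1)-\kappa(\theta_2))\nabla\theta_2$ leaves $\int\kappa(\theta_1)|\nabla w|^2\geq\|\nabla w\|^2$ as leading dissipation, while the cross term is controlled by the mean value theorem on $\kappa$ together with the $L^\infty(L^{3q})$ regularity of the $\theta_i$ and the $L^2(L^6)$ bound on $\nabla\theta_2$ (coming from $\nabla\theta_2=\nabla K(\theta_2)/\kappa(\theta_2)$ and $K(\theta_2)\in L^2(H^2)\hookrightarrow L^2(W^{1,6}(\Omega))$); a Gronwall argument then forces $w\equiv 0$. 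The final bound \eqref{emme23} comes from interpolating between the $L^\infty(L^{3q})$ and $L^{4q}(L^{12q})$ controls, with the $\mu_0(\rho)\mu_1(T)$ constants tracked through the chain of estimates.

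The hardest point will be closing the third estimate: the right-hand side $\int(m^2-m\theta)\kappa(\theta)\theta_t$ contains terms like $\int m^2\theta^{q+2}$ and $\int m^4\theta^q$ that lie exactly at the threshold afforded by the $L^\infty(L^{3q})\cap L^{4q}(L^{12q})$ integrability gained in the preceding step. In fact the whole ladder is finely tuned: the power $3q-1$ chosen in the second test, the assumption $\theta_0\in L^{3q}(\Omega)$ in \eqref{initialtemp}, and the requirement $K(\theta_0)\in V$ are precisely calibrated so that the second and third estimates can interlock.
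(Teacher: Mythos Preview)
Your ladder of a priori estimates and the compactness passage are essentially the paper's, but two steps diverge.

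\textbf{Positivity.} Testing by $-\theta_{n,-}$ at the Galerkin level is not legitimate: the negative part of $\theta_n$ does not belong to the finite-dimensional space $V_n$ spanned by the Neumann eigenfunctions, so the projected equation cannot be paired with it. The paper circumvents this by never relying on the sign of $\theta_n$: with $\kappa$ extended to $1+|r|^q$ on all of $\R$ (as you note), it uses the odd test $\theta|\theta|^{3q-2}$, which is well-defined without any sign condition. All estimates are carried through sign-free, the limit is taken, and only for the limit solution is nonnegativity established---not by testing with $-\theta_-$ but by observing that $m^2-m\theta\geq -\tfrac12\theta^2$, so that $v\equiv 0$ is a subsolution. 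You should also insert a preliminary test by $\theta$ itself (yielding $\theta\in L^\infty(0,T;H)$): the paper needs this bound to handle the $\|\theta\|^{4q}$ terms that arise on the right of the $L^{3q}$ estimate via the nonlinear Poincar\'e inequality \eqref{nlPoi}.

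\textbf{Uniqueness.} Your argument---test by $w$ and split the flux---produces the cross term $\int(\kappa(\theta_1)-\kappa(\theta_2))\nabla\theta_2\cdot\nabla w$, and after Young and the mean value theorem this leaves $\int \max(|\theta_1|,|\theta_2|)^{2(q-1)}w^2|\nabla\theta_2|^2$. With $\nabla\theta_2\in L^6$ and $\theta_i\in L^{3q}$ in space, H\"older forces $w$ into $L^6$ (for $q=2$) or higher (for $q>2$), i.e.\ into a norm involving $\|\nabla w\|$; closing a Gronwall in $\|w\|^2$ then requires a further interpolation whose time-integrability is not obvious for general $q$. The paper takes a different, cleaner route: it rewrites the difference equation as $\Theta_t+A[K(\theta_1)-K(\theta_2)]=K(\theta_1)-K(\theta_2)-m\Theta$ with $A=-\Delta+I$, and tests by $\mathcal N\Theta=A^{-1}\Theta$. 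Monotonicity gives $(K(\theta_1)-K(\theta_2),\Theta)\geq\|\Theta\|^2$ as dissipation, while the right-hand side is controlled via the pointwise bound $|K(\theta_1)-K(\theta_2)|\leq|\Theta|(1+|\theta_1|^q+|\theta_2|^q)$, yielding $\|K(\theta_1)-K(\theta_2)\|_{L^{6/5}}\leq c\|\Theta\|(1+\|\theta_1\|_{L^{3q}}^q+\|\theta_2\|_{L^{3q}}^q)$ paired with $\|\mathcal N\Theta\|_{L^6}\leq c\|\Theta\|_*$. This closes a Gronwall in $\|\Theta\|_*^2$ using only the $L^\infty(0,T;L^{3q}(\Omega))$ bound, uniformly in $q\geq 2$.
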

\begin{proof}

Our first concern are formal a priori estimates that can be made rigorous in a Faedo-Galerkin scheme.

\smallskip\noindent
Testing \eqref{eqfrtempm} by $\theta$, we infer
\begin{equation}\label{tetatest}
\12 \dfrac{d}{dt}\Vert \theta\Vert^2 + \io \kappa(\theta)|\nabla \theta|^2 =\io m^2\theta-\io m\theta^2.
\end{equation}
Exploiting the definition of $\kappa(\theta)$ in \eqref{kappa}, the second term on the left hand side reads as
\begin{align}\nonumber
\io \kappa(\theta)|\nabla \theta|^2 &= \Vert\nabla \theta\Vert^2+\io |\theta|^q|\nabla \theta|^2\\ \label{unzipkappa}
&=\Vert \nabla \theta\Vert^2+\dfrac{4}{(q+2)^2}\left\Vert \nabla \left(|\theta|^{\frac{q}{2}+1}\right)\right\Vert^2.
\end{align}
The first term on the right hand side of \eqref{tetatest} can be controlled by H\"{o}lder's and Young's inequalities as
\begin{align}\label{rhs1teta}
\io m^2\theta&\leq \Vert m\Vert_{L^{6}(\Omega)}^2\Vert \theta\Vert_{L^\frac{3}{2}(\Omega)}
\leq c\Vert m\Vert^2_{L^{6}(\Omega)}\Vert\theta\Vert
\leq  c\Vert m\Vert^2_{L^{6}(\Omega)}(1+\Vert\theta\Vert^2).
\end{align}
On the other hand, similarly, we might take care of the second term on the right hand side of \eqref{tetatest}: using the interpolation inequality
$$\Vert \theta\Vert_{L^{\frac{12}5}(\Omega)}\leq c \Vert \theta\Vert^\alpha_{L^2(\Omega)}
\Vert \theta\Vert^{1-\alpha}_{L^6(\Omega)},$$
with $\dfrac{5}{12}=\dfrac{\alpha}{2}+\dfrac{1-\alpha}{6}$ $\left(\alpha=\dfrac 34\right)$, and the Sobolev embedding $W^{1,2}(\Omega)\hookrightarrow L^6(\Omega)$, it holds
\begin{align}
\nonumber
\io m\theta^2 & \leq
c\Vert m\Vert_{L^6(\Omega)}\Vert \theta\Vert^{\frac 32}_{L^2(\Omega)}\Vert \theta\Vert^{\frac12}_{L^6(\Omega)}
\leq
c\Vert m\Vert_{L^6(\Omega)}\Vert \theta\Vert^{\frac 32}\Vert \theta\Vert^{\frac12}_{V}
  \leq \dfrac{1}{2}\Vert \theta\Vert^2_V+c\Vert m\Vert^{\frac43}_{L^6(\Omega)}\Vert\theta\Vert^2\\
  & \leq \dfrac{1}{2}\Vert \nabla \theta\Vert^2+c(1+\Vert m\Vert^{\frac43}_{L^6(\Omega)})\Vert\theta\Vert^2,
\label{rhs2teta}
\end{align}
where we have also exploited Young's inequality.
Combining \eqref{unzipkappa}, \eqref{rhs1teta}, \eqref{rhs2teta} in \eqref{tetatest}, we infer
\begin{align*}
\dfrac{d}{dt}\Vert\theta\Vert^2+\Vert \nabla \theta\Vert^2+\dfrac{2}{(q+2)^2}\left\Vert \nabla \left(|\theta|^{\frac{q}{2}+1}\right)\right\Vert^2
\leq c \left(1+\Vert m\Vert_{L^{6}(\Omega)}^2\right)\Vert\theta\Vert^2+c\Vert m\Vert_{L^{6}(\Omega)}^2,
\end{align*}
hence
\begin{align}
\Vert\theta\Vert_{L^\infty(0,T;H)\cap L^2(0,T;V)}+\left\Vert \nabla \left(|\theta|^{\frac{q}{2}+1}\right)\right\Vert_{L^2(0,T;H)}\leq c(\rho,T),
\label{thinfty}
\end{align}
where, in this proof, $c\geq 0$ stands for monotone increasing function in both its arguments and may possibly change from line to line.
The bound on the second norm above yields
\begin{equation}
|\theta|^{\frac{q}{2}+1}\in L^2(0,T;V),
\end{equation}
since
$$
\||\theta|^{\frac{q}{2}+1}\|^2_V\leq c(\|\theta\|^{q+2}_{L^1(\Omega)}+\|\nabla(|\theta|^{\frac{q}{2}+1})\|^2)
\leq c(\|\theta\|^{q+2}+\|\nabla(|\theta|^{\frac{q}{2}+1})\|^2)
$$
according to the  nonlinear Poincar\'{e} inequality \eqref{nlPoi} and the continuous inclusion $L^2(\Omega)\subset L^1(\Omega)$.

\medskip\noindent
%
\noindent
The test of the temperature equation \eqref{eqfrtemp} by $\theta|\theta|^{3q-2}$ leads to
\begin{align}\label{testthpower}
\io \theta_t \theta|\theta|^{3q-2}+\io\kappa(\theta)\nabla\theta\cdot\nabla(\theta|\theta|^{3q-2})= \io m^2\theta|\theta|^{3q-2}-\io m|\theta|^{3q}.
\end{align}
We first focus on the left hand side of \eqref{testthpower}; since
$$\dfrac{d}{dt}\io|\theta|^{3q}= 3q\io \theta|\theta|^{3q-2} \theta_t, $$
the first term on the left hand side can be equivalently written as
\begin{align}\label{lhsthpower1}
\io \theta_t \theta|\theta|^{3q-2} =\dfrac{1}{3q}\dfrac{d}{dt}\io|\theta|^{3q}.
\end{align}
On the other hand,
$$\nabla (\theta|\theta|^{3q-2})=(3q-1)|\theta|^{3q-2}\nabla\theta$$
entails, using the definition of $\kappa(\th)$ in \eqref{kappa},
\begin{align}\nonumber
\io\kappa(\theta)\nabla\theta\cdot\nabla(\theta|\theta|^q)
&=(3q-1)\io (|\theta|^{3q-2}+|\theta|^{4q-2})|\nabla\theta|^2\\ \label{lhsthpower2}
&=\dfrac{4(3q-1)}{(3q)^2}\Vert \nabla \left(|\theta|^\frac{3q}{2}\right)\Vert^2+\dfrac{3q-1}{4q^2}\Vert\nabla(|\theta|^{2q})\Vert^2.
\end{align}
Inserting \eqref{lhsthpower1} and \eqref{lhsthpower2} in \eqref{testthpower}, we infer
\begin{align}\nonumber
&\dfrac{1}{3q}\dfrac{d}{dt}\io|\theta|^{3q}+\dfrac{4(3q-1)}{(3q)^2}\Vert \nabla \left(|\theta|^\frac{3q}{2}\right)\Vert^2+\dfrac{3q-1}{4q^2}\Vert\nabla(|\theta|^{2q})\Vert^2\\
&\qquad= \io m^2\theta|\theta|^{3q-2}-\io m|\theta|^{3q}=: I_A+I_B.
\label{testthpowerbis}
\end{align}
We now take care of the right hand side.
For simplicity, we start by studying the second term $I_B$.
We preliminarily observe that, by interpolation,
\begin{equation}
\label{interpol3q2}
\|\theta\|_{L^{\frac{3q}2}(\Omega)}\leq \|\theta\|^{\frac 2{3q-2}}_{L^2(\Omega)}\|\theta\|^{\frac {3q-4}{3q-2}}_{L^{3q}(\Omega)}.
\end{equation}

By H\"{o}lder's inequality with exponents $6$, $3/2$ and $6$, the Sobolev embedding $W^{1,2}(\Omega)\hookrightarrow L^6(\Omega)$,
\eqref{interpol3q2} and then Young's inequality together with \eqref{nlPoi}, we have
\begin{align}\nonumber
I_B:=-\io m|\theta|^{3q}&=\io |m||\theta|^q|\theta|^{2q}
\leq c\Vert m\Vert_{L^{6}(\Omega)} \|\theta\|^q _{L^{\frac{3q}2}(\Omega)}\Vert |\theta|^{2q}\Vert_V\\ \nonumber
&\leq \dfrac{\delta}{2}\Vert |\theta|^{2q}\Vert^2_V+ c \Vert m\Vert^2_{L^{6}(\Omega)}\|\theta\|^{2q}_{L^{\frac{3q}2}(\Omega)}\\
\nonumber
&\leq \dfrac{3q-1}{16q^2}\Vert \nabla | \theta|^{2q}\Vert^2+c \Vert \theta\Vert^{4q}_{L^1(\Omega)}+ c \Vert m\Vert^2_{L^{6}(\Omega)}
\|\theta\|^{\frac{4q}{3q-2}}_{L^{2}(\Omega)} \|\theta\|^{2q\frac{3q-4}{3q-2}}_{L^{3q}(\Omega)}
\\ \label{IB}
&\leq \dfrac{3q-1}{16q^2}\Vert \nabla |\theta|^{2q}\Vert^2+c \Vert \theta\Vert^{4q}+ c \Vert m\Vert^2_{L^{6}(\Omega)}\|\theta\|^{3q}_{L^{3q}(\Omega)}+c \Vert m\Vert^2_{L^{6}(\Omega)},
\end{align}
for a suitable choice of $\delta>0$ depending on $q$.
On the other hand, arguing similarly as above, with exponents $3,6$ and $2$ we obtain
\begin{align}\nonumber
I_A:=\io m^2\theta|\theta|^{3q-2} &
\leq
\io m^2 |\theta|^{2q}|\theta|^{q-1}
\leq \Vert m\Vert_{L^6(\Omega)}^2 \Vert |\theta|^{2q}\Vert_V
\|\theta\|^{q-1}_{L^{2q-2}(\Omega)}
\\
\nonumber
&
\leq \dfrac{\delta}{2} \Vert |\theta|^{2q}\Vert^2_V +c\Vert m\Vert_{L^6(\Omega)}^4
\|\theta\|^{2(q-1)}_{L^{2q-2}(\Omega)}\\
\nonumber
&
\leq \dfrac{3q-1}{16q^2}\Vert \nabla |\theta|^{2q}\Vert^2+c \Vert \theta\Vert^{4q}_{L^1(\Omega)} +c\Vert m\Vert_{L^6(\Omega)}^4
\|\theta\|^{3q}_{L^{3q}(\Omega)}+c \Vert m\Vert^4_{L^{6}(\Omega)}\\
&
\leq \dfrac{3q-1}{16q^2}\Vert \nabla |\theta|^{2q}\Vert^2+c \Vert \theta\Vert^{4q} +c\Vert m\Vert_{L^6(\Omega)}^4\|\theta\|^{3q}_{L^{3q}(\Omega)}+c \Vert m\Vert^4_{L^{6}(\Omega)}
\end{align}
having also exploited \eqref{nlPoi} and properly chosen $\delta>0$.
Replacing the above estimates in \eqref{testthpowerbis}, on account of \eqref{thinfty}, we have
\begin{align}\nonumber
&\dfrac{1}{3q}\dfrac{d}{dt}\|\theta\|^{3q}_{L^{3q}(\Omega)}+\dfrac{4(3q-1)}{(3q)^2}\Vert \nabla \left(|\theta|^\frac{3q}{2}\right)\Vert^2+\dfrac{3q-1}{8q^2}\Vert\nabla(|\theta|^{2q})\Vert^2\\
&\qquad\leq  c(\Vert m\Vert_{L^6(\Omega)}^4+1)\|\theta\|^{3q}_{L^{3q}(\Omega)}+c(\Vert m\Vert_{L^6(\Omega)}^4+1)
\label{testthpowerfinal}
\end{align}
for some $c=c(\rho,T)$. Therefore $\theta\in L^\infty(0,T;L^{3q}(\Omega))$ with
\begin{equation}
\label{3q}
\|\theta\|_{L^\infty(0,T;L^{3q}(\Omega))}+\| |\theta|^{2q}\|_{L^2(0,T;V)}\leq \mu_0(\rho)\mu_1(T),
\end{equation}
for some monotone increasing functions $\mu_i\geq 0$, possibly changing from line to line.
In particular, the previous inequality yields
\begin{equation}
\label{boundKappone1}
\|K(\theta)\|_{L^\infty(0,T;H)}\leq  \mu_0(\rho)\mu_1(T).
\end{equation}
since $2q+2\leq 3q$.

\smallskip\noindent
Our next task is an estimate for the time derivative of $\theta$: we then observe that, according to \eqref{kappone}, the temperature equation \eqref{eqfrtemp} reads  as
$$
\theta_t-\Delta K(\theta)=m^2-m\theta,
$$
which we formally multiply by $[K(\theta)]_t=\kappa(\theta)\theta_t$.
Then we infer
\begin{align}\label{test-tetat}
\Vert\sqrt{\kappa(\theta)}\theta_t\Vert^2+\12\dfrac{d}{dt}\Vert\nabla K(\theta)\Vert^2=\io m^2\kappa(\theta)\theta_t-\io m \theta \kappa (\theta)\theta_t=:J_1 +J_2.
\end{align}
The first term on the righthand side is managed by  H\"older's inequality with exponents $3,6$ and $2$, Young's inequality, \eqref{kappa} and \eqref{3q}, so that
\begin{align}\nonumber
J_1:=\io m^2\sqrt{\kappa(\theta)}\sqrt{\kappa(\theta)}\theta_t
&
\leq \|m\|^2_{L^6(\Omega)} \|\sqrt{\kappa(\theta)}\|_{L^6(\Omega)}\|\sqrt{\kappa(\theta)}\theta_t\|
\\
\nonumber
& \leq \dfrac{1}{4}\Vert \sqrt{\kappa(\theta)}\theta_t\Vert^2+c\Vert m\Vert^4_{L^{6}(\Omega)}\left(\io \left[\kappa(\theta)\right]^3\right)^\frac{1}{3}\\ \nonumber
&\leq \dfrac{1}{4}\Vert \sqrt{\kappa(\theta)}\theta_t\Vert^2+c\Vert m\Vert^4_{L^{6}(\Omega)}
\left(\io \left( 1+|\theta|^{3q}\right)\right)^\frac{1}{3}\\ \label{Teta1}
&\leq \dfrac{1}{4}\Vert \sqrt{\kappa(\theta)}\theta_t\Vert^2+c\Vert m\Vert^4_{L^{6}(\Omega)}.
\end{align}
Arguing similarly, we can estimate $J_2$ as follows.
\begin{align}
\nonumber
J_2 & =\io m \theta \kappa(\theta)\theta_t=\io m  (\sqrt{\kappa(\theta)} \theta)\sqrt{\kappa(\theta)}\theta_t \\
\nonumber
& \leq \|m\|_{L^6(\Omega)} \|\theta \sqrt{\kappa(\theta)}\|_{L^3(\Omega)} \|   \|\sqrt{\kappa(\theta)}\theta_t\|\\
\nonumber
& \leq \dfrac 14  \|\sqrt{\kappa(\theta)}\theta_t\|^2+c\|m\|^2_{L^6(\Omega)} \|\theta \sqrt{\kappa(\theta)}\|^2_{L^3(\Omega)}\\
\nonumber
& \leq \dfrac 14  \|\sqrt{\kappa(\theta)}\theta_t\|^2+c\|m\|^2_{L^6(\Omega)}\left(\io (|\theta|^3+|\theta|^{\frac 32q+3} \right)^{\frac23}\\
\nonumber
& \leq \dfrac 14  \|\sqrt{\kappa(\theta)}\theta_t\|^2+c\|m\|^2_{L^6(\Omega)}\left(\io (1+|\theta|^{3q} )\right)^{\frac23}\\
& \leq \dfrac 14  \|\sqrt{\kappa(\theta)}\theta_t\|^2+c\|m\|^2_{L^6(\Omega)},
\label{Teta2}
\end{align}
having applied the H\"older inequality with exponents $6, 3$ and $2$.
Replacing \eqref{Teta1} and \eqref{Teta2} in \eqref{test-tetat}, we are lead to
$$
\Vert\sqrt{\kappa(\theta)}\theta_t\Vert^2+\dfrac{d}{dt}\Vert\nabla K(\theta)\Vert^2\leq c(\|m\|^4_{L^6(\Omega)}+1).
$$
An integration in time on account of \eqref{boundKappone1} provides
\begin{equation}
\label{KappaLinfV}
\Vert \sqrt{\kappa(\theta)}\theta_t\Vert_{L^2(0,T;H)}+\Vert K(\theta)\Vert_{L^\infty(0,T;V)}\leq c(\rho,T).
\end{equation}
Thus, it follows in particular that
\begin{align}
\label{thinftyV}
& \Vert \theta\Vert_{L^\infty(0,T;V)}\leq c(\rho,T),\\
\label{thtbound}
&\Vert \theta_t\Vert_{L^2(0,T;H)}\leq c(\rho,T),\\
\label{Deltathbound}
&\Vert \Delta K(\theta)\Vert_{L^2(0,T;H)}\leq c(\rho,T).
\end{align}
Besides, by interpolation, we eventually infer
\begin{equation}
K(\theta)\in L^4(0,T;W^{1,3}(\Omega)),
\end{equation}
so that, by definition of $K$,
\begin{equation}
\label{key}
\|\nabla \theta\|_{L^4(0,T;L^3(\Omega))}\leq c(\rho,T).
\end{equation}
We can also prove that
$\pt[ K(\theta)]=\kappa(\theta) \theta_t\in L^2(0,T;L^{ \frac 32}(\Omega))$.
Indeed, by \eqref{3q} we learn that
\begin{equation}
\label{capo}
\|\kappa(\theta)\|_{L^\infty(0,T;L^3(\Omega))}\leq c(\rho,T).
\end{equation}
Therefore, owing also to \eqref{KappaLinfV},
$$
\int_0^T \|\pt [ K(\theta(s)]\|^2_{L^{\frac32}(\Omega)}ds\leq \|\kappa(\theta)\|_{L^\infty(0,T;L^3(\Omega))} \|\sqrt{\kappa(\theta)}\theta_t\|_{L^2(Q_T)}\leq c
$$
Since we already know $K(\theta)\in L^2(0,T;H^2(\Omega))$ by \eqref{KappaLinfV} and  \eqref{Deltathbound}, it follows
\begin{equation}
\label{capcont}
K(\theta)\in C([0,T];V).
\end{equation}

\medskip\noindent
The previous computations hold for each approximating solution $\theta_n$ in a Faedo-Galerkin scheme, therefore
\begin{align*}
&\|\pt \theta_n\|_{L^2(Q_T)}+\| \theta_n\|_{L^\infty(0,T,V)}+\|\theta_n\|_{L^\infty(0,T;L^{3q}(\Omega))}
+\|\theta_n\|_{L^4(0,T;W^{1,3}(\Omega))}\leq c(\rho,T)\\
& \\
& \|K(\theta_n)\|_{ L^\infty(0,T;V) \cap L^2(0,T;H^2(\Omega))}+\|\pt K(\theta_n)\|_{L^2(0,T;L^{\frac32}(\Omega))}\leq c(\rho,T)
\end{align*}
as well as
\begin{equation}
\| |\theta_n|^{2q}\|_{L^2(0,T,V)}\leq c(\rho,T),
\end{equation}
where $c(\rho,T)$ does not depend on $n$.
Besides,
\begin{equation}
\label{elle12}
\|\theta_n\|_{L^{4q}(0,T;L^{12q}(\Omega))}\leq c(\rho,T),
\end{equation}
on account \eqref{3q} and of the inclusion $V\subset L^6(\Omega)$.
Indeed,
\begin{align*}
\|\theta_n\|^{4q}_{L^{4q}(0,T;L^{12q}(\Omega))}& =\int_0^T \left(\io |\theta_n|^{12q} \right)^{\frac13}=
\int_0^T \left(\io (|\theta_n|^{2q})^{6} \right)^{\frac13}\\
& \leq
c \int_0^T \left(\||\theta_n|^{2q}\|^2_V \right)\leq c(\rho,T).
\end{align*}
Being
$$
W^{1,3}(\Omega)\Subset L^s(\Omega),\quad \text{for any finite}\quad s\geq 3
$$
and applying Aubin-Simon compactness results,
the inclusion of
$H^1(0,T;L^2(\Omega))\cap L^4(0,T;W^{1,3}(\Omega))$ in $L^4(0,T;L^s(\Omega))$ is compact for any $s\geq 3$.
(In particular, when $s=\frac{12q}{q+1}$).
Then, up to a subsequence.
\begin{equation}
\label{convth1}
\theta_n\to \theta\quad \textrm{strongly in}\quad L^4(0,T;L^s(\Omega)),
\end{equation}
for any finite $s\geq 3$, as well as
\begin{align*}
\pt \theta_n \to \pt \theta\quad \textrm{weakly in}\quad L^2(0,T;L^2(\Omega)),\\
\theta_n\to \theta\quad \textrm{weakly in}\quad L^4(0,T;W^{1,3}(\Omega))\cap L^{4q}(0,T;L^{12q}(\Omega)),\\
\theta_n\to \theta\quad \text{weakly-* in}\quad L^\infty(0,T;L^{3q}(\Omega))\\
\theta_n\to \theta\quad \text{a.e. in}\quad \Omega\times (0,T)).
\end{align*}
Thus, we also have
\begin{align*}
K(\theta_n)\to K(\theta) \quad \textrm{strongly in}\quad C([0,T];H)\quad \text{and}\quad  L^2(0,T;V)\\
 K(\theta_n) \to K(\theta)\quad \textrm{weakly in}\quad H^1(0,T;L^{\frac 32}(\Omega))\quad \text{and} \quad L^2(0,T;H^2(\Omega)),\\
  K(\theta_n) \to K(\theta)\quad \textrm{weakly-* in}\quad L^\infty(0,T;V).
\end{align*}
By these convergences  $\theta$ is indeed a solution to \eqref{eqfrtemp}.
Actually, $\theta\geq 0$ a.e. in $\Omega\times (0,T)$
and therefore there was no loss of generality in $\kappa(s)=1+|s|^q$.

\smallskip\noindent
Indeed, using the Young's inequality on the term $m\theta$ in the temperature equation \eqref{eqfrtemp}, it is easy to see that
\begin{align*}
\theta_t-\dive[\kappa(\theta)\nabla\theta] &=m^2-m\theta\\
&\geq m^2-\dfrac{\theta^2}{2}-\dfrac{m^2}{2}\geq -\dfrac{\theta^2}{2}.
\end{align*}
Hence $v \equiv 0$ is a subsolution to \eqref{eqfrtemp} and thus $\theta(\cdot,t)\geq 0, \quad \forall t\in [0,T]$.

\medskip\noindent
Once the existence of a solution to \eqref{eqfrtemp} is proved, we also show its uniqueness.

\medskip\noindent
Having fixed two solutions $\theta_1,\theta_2$ departing from the same initial datum $\theta_0$,
notice that $\Theta=\theta_1-\theta_2$ solves
\begin{equation}
\label{eqtempdifferenza}
\Theta_t+A [K(\theta_1)-K(\theta_2)]=K(\theta_1)-K(\theta_2)-m\Theta,
\end{equation}
where $A=-\Delta+\mathbb I:V\rightarrow V'$ is the invertible operator with inverse $\mathcal N=A^{-1}$.
Multiplying the above equation by $\mathcal N\Theta$ we are led to
\begin{align*}
\dfrac 12 \ddt \|\Theta\|^2_*+(K(\theta_1)-K(\theta_2),\Theta)=(K(\theta_1)-K(\theta_2),\mathcal N\Theta)-(m\Theta,\mathcal N\Theta).
\end{align*}
Being $(K(\theta_1)-K(\theta_2),\Theta)\geq \|\Theta\|^2$, it follows
\begin{align*}
\dfrac 12 \ddt \|\Theta\|^2_*+\|\Theta\|^2\leq \|K(\theta_1)-K(\theta_2)\|_{L^{\frac65}(\Omega)}\|\mathcal N \Theta\|_{L^6(\Omega)}+\|m\|_{L^3(\Omega)}\|\Theta\|\|\mathcal N \Theta\|_{L^6(\Omega)}.
\end{align*}
We now introduce the function $\ell(r)=r^{q+1}$
for $r\geq 0$, whose derivative is $\ell'(r)=(q+1)r^q $. Taking advantage of this notation, we have
\begin{align*}
|K(\theta_1)-K(\theta_2)|& =\left|\Theta+\dfrac1{q+1} [\ell(\theta_1)-\ell(\theta_2)]\right|\\
& =\left|\Theta+\dfrac 1{q+1}\int_0^1 \ell'(s\theta_1+(1-s)\theta_2)ds\,  \Theta\right|\\
& =|\Theta| \left|1+\int_0^1 |s\theta_1+(1-s)\theta_2|^q ds\, \right|\\
& \leq |\Theta|(1+|\theta_1|^q+|\theta_2|^q).
\end{align*}
Therefore,
\begin{align*}
\|K(\theta_1)-K(\theta_2)\|_{L^{\frac 65}(\Omega)}& \leq c \left(\into |\Theta|^{\frac 65} (1+|\theta_1|^{\frac 65 q}+|\theta_2|^{\frac 65 q})\right)^{5/6}\\
& \leq \|\Theta\| \left(\into (1+|\theta_1|^{\frac 65 q}+|\theta_2|^{\frac 65 q})^{\frac 52}\right)^{\frac13}\\
& \leq c \|\Theta\| \left(\into (1+|\theta_1|^{3 q}+|\theta_2|^{3 q})\right)^{\frac13}\\
&
\leq c\|\Theta\| (1+\|\theta_1\|^{q}_{L^{3q}(\Omega)}+\|\theta_2\|^{q}_{L^{3q}(\Omega)}),
\end{align*}
by H\"older's inequality with exponents $5/3$ and $5/2$. Thus, by the inclusion $W^{1,2}(\Omega)\subset L^6(\Omega)$ and the continuity of $\mathcal N$, we deduce
\begin{align}
\nonumber
\|K(\theta_1)-K(\theta_2)\|_{L^{\frac 65}(\Omega)}\|\mathcal N \Theta\|_{L^6(\Omega)}
& \leq c \|\Theta\|_* \|\Theta\|
(1+\|\theta_1\|^{q}_{L^{3q}(\Omega)}+\|\theta_2\|^{q}_{L^{3q}(\Omega)})\\
& \leq \dfrac 14 \|\Theta\|^2+c\|\Theta\|^2_* (1+\|\theta_1\|^{q}_{L^{3q}(\Omega)}+\|\theta_2\|^{q}_{L^{3q}(\Omega)})^2.
\label{kappadiff1}
\end{align}
Recalling that
$$\|\theta_1\|_{L^\infty(0,T;L^{3q}(\Omega))}+\|\theta_2\|_{L^\infty(0,T;L^{3q}(\Omega))}\leq c,$$
we are lead to
$$
\ddt \|\Theta\|^2_*+\|\Theta\|^2 \leq c(1+\|m\|^2_{L^3(\Omega)})\|\Theta\|^2_*,
$$
therefore Gronwall's Lemma allows to conclude.
\end{proof}

\begin{proof}[Proof of Theorem \ref{localthm}]
Let us define  the operator $\mathcal T:=\mathcal{T}_2\circ \mathcal T_1$, where $(\fhi,\sigma)=:\mathcal T_1(\tilde{\theta}):Q_T\rightarrow \mathbb{R}^2$, with $\tilde{\theta}\in \Theta_R(T)$ and $\theta=:\mathcal{T}_2(\fhi,\sigma):Q_T\rightarrow \mathbb{R}$, with $(\fhi,\sigma)\in \mathcal X_R(T)$.\\
The local (in time) existence of the solution is obtained showing that, for any fixed $R>0$, the Schauder fixed point theorem applies to the operator
by
\begin{align*}
\mathcal T: \Theta_R(T)& \rightarrow L^4(0,T;L^6(\Omega))\\
\tilde \theta& \mapsto \theta
\end{align*}
 provided that $T$ is small enough.\\
 First of all we see that $\mathcal T$ is well defined for any $T,R>0$.
 Indeed, having fixed $(\theta_0,\ph_0,\sigma_0)$ satisfying \eqref{initialtemp}--\eqref{initialnutr} and \eqref{rbound}, given any $\tilde\theta\in \tilde\Theta_R(T)$,
by Lemma \ref{lemmino1}, there exists a unique solution $(\ph,\sigma)\in \mathcal X_R(T)$ to \eqref{eqfrfhi}-\eqref{eqfrnutr} corresponding to $\tilde\theta$, complemented
with homogeneous Neumann boundary conditions and meeting the initial data.
Indeed, $\ph_t\in L^4(0,T;L^6(\Omega))$
with
$$
\|\ph_t\|_{L^4(0,T;L^6(\Omega))}\leq m_0(R)m_1(T).
$$
Then, we can plug $m=\ph_t \in \mathcal H_\rho(T)$ with $\rho=m_0(R)m_1(T)$ in \eqref{eqfrtempm} and, by Lemma \ref{lemmino2}, see that there exists a unique solution $\theta\geq 0$ such that
$$
\|\theta\|_{L^{\infty}(0,T;L^{3q}(\Omega))}\leq \mu_0(R)\mu_1(T).
$$
Therefore the operator $\mathcal T (\tilde \theta)=\theta$ is well defined.
We now prove the following.
\begin{itemize}
\item{\bf The operator $\mathcal T:\Theta_R(T)\rightarrow \Theta_R(T)$ with a suitable choice of $T$} since
$$
\|\theta\|_{L^4(0,T;L^6(\Omega))}\leq T^{\frac14}\|\theta\|_{L^{\infty}(0,T;L^{3q}(\Omega))}\leq  T^{\frac14} c(R,T)\leq R,
$$
provided that $T$ is small enough.
\item{\bf The operator $\mathcal T$ is continuous with respect to the $L^4(0,T;L^6(\Omega))$ norm}
This can be easily seen along the same lines of the uniqueness proof.
\item{\bf The operator $\mathcal T$ is compact} since
$$H^1(0,T;H)\cap L^4(0,T;W^{1,3}(\Omega))\Subset L^4(0,T;L^6(\Omega))$$
thus $\mathcal T(\Theta_R(T))$ is compact in $\Theta_R(T)$.
\end{itemize}
Being the assumptions of Schauder fixed point theorem in place for $T=T(R)>0$ small enough, we deduce the local existence of a solution to our problem.

\smallskip\noindent
Moreover, since the estimates needed to prove uniqueness hold true on any time interval $(0,T)$, the solution is also unique.
Indeed, given $(\theta_i,\fhi_i,\sigma_i)$, both solutions originating from the same initial datum $(\theta_0,\fhi_0,\sigma_0)$, let
 $(\theta,\fhi,\sigma)=(\theta_1-\theta_2,\fhi_1-\fhi_2,\sigma_1-\sigma_2)$.
In the sense we specified in Theorem \ref{localthm}, this triplet solves

\begin{equation*}
\begin{cases}
\theta_t-\Delta[K(\theta_1)-K(\theta_2)]+K(\theta_1)-K(\theta_2)\\
\qquad =K(\theta_1)-K(\theta_2)+\pt\fhi_1^2-\pt\fhi_2^2-\theta_1\pt\fhi_1+\theta_2\pt\fhi_2, \quad \text{a.e. in}\,\,Q_T\\
\fhi_t-\Delta \fhi=-[F'(\fhi_1)-F'(\fhi_2)]+\theta+(\PP\sigma_1-\AA)h(\fhi_1)-(\PP\sigma_2-\AA)h(\fhi_2), \,\, \text{a.e. in}\,\,Q_T\\
\sigma_t-\Delta \sigma+\BB\sigma=-\CC \sigma_1h(\fhi_1)+\CC \sigma_2 h(\fhi_2),\quad \text{in}\quad V', \,\, \text{a.e. in}\, (0,T),
\end{cases}
\end{equation*}
complemented with no flux boundary conditions and null initial condition.
We introduce the functional
$$
\mathcal E(t)=\|\theta(t)\|^2_*+\|\fhi(t)\|^2+\dfrac 12 \|\nabla\fhi(t)\|^2+\|\sigma(t)\|^2,
$$
for which a suitable differential inequality holds, as we see exploiting in particular the continuous operator $\mathcal N:V\rightarrow V'$ as before.
Indeed, adding together the product of the first equation above by $\mathcal N \theta$, the second by $\fhi+\dfrac 12 \fhi_t$ and the third by $\sigma$,
we see that

%
%
%
%
%
\begin{align*}
& \dfrac 12\ddt\mathcal E
+\|\nabla \fhi\|^2+\|\nabla \sigma\|^2+\BB \|\sigma\|^2+\dfrac 12\|\fhi_t\|^2+\|\theta\|^2\\
& \leq c \|\fhi\|^2+\|\theta\| \|\fhi\|+c\|\sigma\|\|\fhi\|-\dfrac 12\io [F'(\fhi_1)-F'(\fhi_2)]\fhi_t+\dfrac 12
\|\theta\| \|\fhi_t\|+c\|\sigma\| \|\fhi_t\|\\
 & \quad+\into [K(\theta_1)-K(\theta_2)]\mathcal N \theta+
\into \pt \ph (\pt \fhi_1+\pt\fhi_2)\mathcal N \theta
-\into \theta \pt\fhi_1\mathcal N\theta-\into \theta_2 \pt\fhi \mathcal N\theta.
\end{align*}
We focus on the five non trivial terms on the righthand side: first, notice that
$$
-\dfrac 12\into[F'(\fhi_1)-F'(\fhi_2)]\fhi_t\leq (1+\|\fhi_1\|^2_{L^6(\Omega)}+\|\fhi_1\|^2_{L^6(\Omega)}) \|\fhi\|_V\|\fhi_t\|\leq c\|\fhi\|_V \|\fhi_t\|.
$$
Arguing exactly as in \eqref{kappadiff1},
\begin{align*}
\into [K(\theta_1)-K(\theta_2)]\mathcal N \theta & \leq \| K(\theta_1)-K(\theta_2)\|_{L^{\frac65}(\Omega)}\|\mathcal N \theta\|_{L^6(\Omega)}\\
& \leq
c \|\theta\| \|\theta\|_*(1+\|\theta_1\|^q_{L^{3q}(\Omega)} +\|\theta_2\|^q_{L^{3q}(\Omega)} )\leq c\|\theta\| \|\theta\|_*.
\end{align*}
Applying H\"older's inequality
\begin{align*}
\into \pt \ph (\pt \fhi_1+\pt\fhi_2)\mathcal N \theta & \leq \|\ph_t\| (\|\pt \fhi_1\|_{L^3(\Omega)}+\|\pt\fhi_2\|_{L^3(\Omega)})\|\mathcal N\theta\|_{L^6(\Omega)}\\
&
\leq c\|\ph_t\| (\|\pt \fhi_1\|_{L^3(\Omega)}+\|\pt\fhi_2\|_{L^3(\Omega)}) \|\theta\|_*.
\end{align*}
Finally,
\begin{align*}
-\into \theta \pt\fhi_1\mathcal N\theta-\into \theta_2 \pt\fhi \mathcal N\theta
&\leq \|\theta\| \|\mathcal N\theta\|_{L^6(\Omega)} \|\pt\fhi_1\|_{L^3(\Omega)}+\|\theta_2\|_{L^3(\Omega)}\|\fhi_t\|\|\mathcal N\theta\|_{L^6(\Omega)}\\
& \leq c \|\pt\fhi_1\|_{L^3(\Omega)} \|\theta\|\|\theta\|_*+ c+\|\theta_2\|_{L^3(\Omega)}\|\fhi_t\|\|\theta\|_*.
\end{align*}
Collecting the above estimates and applying Young's inequality, we obtain
$$
\dfrac 12 \ddt \mathcal E\leq c (\|\pt\fhi_1\|^2_{L^3(\Omega)}+\|\pt\fhi_2\|^2_{L^3(\Omega)}+\|\theta_2\|^2_{L^3(\Omega)})\mathcal E,
$$
therefore Gronwall's lemma allows to conclude.

\end{proof}

\begin{rmk}
\label{segnormk}
Provided that initial datum $\theta_0\geq \underline{\theta}>0$ a.e. in $\Omega$, where $\underline\theta$ is a constant, the same argument, used in
Lemma \ref{lemmino2} to prove that $\theta\geq 0$ a.e. in $Q_T$, allows to show that $\theta\geq \underline \theta$ a.e. in $Q_T$.
\end{rmk}

\subsection{Global in time existence of the solution}\label{secglobal}
We finally observe that the local solution is in fact global.

\begin{thm}
\label{globalthm}
Let $(\theta_0,\ph_0,\sigma_0)$ satisfy assumptions \eqref{initialtemp}--\eqref{initialnutr} and let $T>0$ be an arbitrary final time. Then, there exists a unique solution $(\theta,\ph,\sigma)$ to system \eqref{system} on the whole time interval $(0,T)$. In particular,
$$
\theta\geq 0,\quad 0\leq   \sigma\leq 1 \quad \text{and}\quad \ph\geq 0\quad \text{a.e. in}\quad Q_T.
$$
\end{thm}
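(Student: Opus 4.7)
The proof proceeds by contradiction. Let $T^*\in(0,T]$ denote the supremum of times up to which the solution provided by Theorem \ref{localthm} can be continued, and suppose $T^*<T$. It suffices to exhibit uniform bounds on $[0,T^*)$ for the quantities controlling the local existence time in Theorem \ref{localthm}, namely $\|\theta(t)\|_{V\cap L^{3q}(\Omega)}+\|K(\theta(t))\|_V+\|\ph(t)\|_{W^{5/3,6}(\Omega)}$. Once a uniform bound $R$ is in hand, choosing $t_0<T^*$ with $T^*-t_0<T(R)/2$ and invoking the local theorem with initial datum $(\theta(t_0),\ph(t_0),\sigma(t_0))$ extends the solution past $T^*$, contradicting maximality.

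The key ingredient is a global energy identity. Testing the phase equation by $\ph_t$ and integrating the temperature equation over $\Omega$, the $\io\theta\ph_t$ contributions cancel and so do the $\io\ph_t^2$ contributions, producing
\[
\ddt\!\left[\io\theta+\12\|\nabla\ph\|^2+\io F(\ph)\right]=\io(\PP\sigma-\AA)h(\ph)\ph_t.
\]
The quantity in brackets is nonnegative (since $\theta\geq 0$ and $F\geq 0$), while on the right-hand side $0\leq\sigma\leq 1$ and $|h|\leq C$ from Lemma \ref{lemmino1} give the bound $c\|\ph_t\|_{L^1(\Omega)}\leq c\|\ph_t\|$. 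Combining this identity with the phase equation tested by $\ph_t$ and with the temperature equation tested by $\theta$, and employing Young and Gagliardo--Nirenberg inequalities, one obtains uniform bounds on $\theta$ in $L^\infty(0,T^*;H)\cap L^2(0,T^*;V)$ and on $\ph$ in $L^\infty(0,T^*;V)\cap H^1(0,T^*;H)$.

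From this basis we bootstrap using the machinery developed for the two frozen problems. The Moser iteration and the parabolic regularity arguments in the proof of Lemma \ref{lemmino1}, which depend on the $L^4(L^6)$ norm of the temperature (finite thanks to the embedding $V\hookrightarrow L^6$), yield uniform bounds on $\ph$ in $L^\infty(Q_{T^*})$ and on $\ph_t$ in $L^4(0,T^*;L^6(\Omega))$. Feeding $m=\ph_t$ into Lemma \ref{lemmino2} then produces uniform bounds for $\theta$ in $L^\infty(0,T^*;V\cap L^{3q}(\Omega))$ and for $K(\theta)$ in $L^\infty(0,T^*;V)$. Finally, the parabolic regularity argument from the remark following Lemma \ref{lemmino1} applied to the $\ph$-equation delivers a uniform bound on $\ph$ in $L^\infty(0,T^*;W^{5/3,6}(\Omega))$. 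The sign constraints $\theta\geq 0$, $\ph\geq 0$, and $0\leq\sigma\leq 1$ carry over to $[0,T^*)$ directly from the local theorem.

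The main obstacle is the derivation of the global energy identity and the closure of the a priori estimates that follow from it: a priori, the quadratic terms $\ph_t^2$ and $\theta\ph_t$ in the temperature equation are uncontrolled, and it is precisely the exact cancellations afforded by the structure of the system that make global existence possible. Once the uniform bounds are secured, the extension via Theorem \ref{localthm} is routine, and uniqueness on $[0,T]$ is immediate from the local uniqueness statement already established.
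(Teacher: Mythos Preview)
Your route differs from the paper's. The paper does not rebuild estimates from a fresh energy identity; it assumes $[0,T)$ is the maximal interval on which the solution already carries the full regularity of Theorem~\ref{localthm}, and shows that $(\hat\theta,\hat\ph,\hat\sigma)=\lim_{t\to T^-}(\theta,\ph,\sigma)$ is an admissible initial datum. For $\hat\sigma$ this is immediate from $0\le\sigma\le1$; for $\hat\ph\in W^{5/3,6}$ the paper uses that for $t>0$ the right-hand side $\tilde f$ of the $\ph$-equation gains regularity (in fact $\tilde f\in L^6(\delta,T;L^6)$), so parabolic theory gives $\ph\in C([\delta,T];W^{5/3,6})$; for $\hat\theta\in L^{3q}\cap V$ and $K(\hat\theta)\in V$ the paper invokes Strauss' Lemma and~\eqref{capcont}. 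The finite-step extension is then referred to~\cite{MSJEE}. No global energy identity appears.

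Your energy identity is correct, but the bootstrap you outline does not close. Two concrete problems. First, the passage to $\theta\in L^\infty(0,T^*;H)\cap L^2(0,T^*;V)$ by testing the temperature equation with $\theta$ is not justified: the right-hand side contains $\io\ph_t^2\,\theta$ and $\io\ph_t\,\theta^2$, and with only $\ph_t\in L^2(0,T^*;H)$ (itself not yet secured) and $\theta\in L^\infty(0,T^*;L^1)$ from the identity, Young and Gagliardo--Nirenberg do not absorb these cubic terms. Second, and more sharply, even granting $\theta\in L^\infty(0,T^*;H)\cap L^2(0,T^*;V)$, the embedding $V\hookrightarrow L^6$ gives $\theta\in L^2(0,T^*;L^6)$, \emph{not} $L^4(0,T^*;L^6)$; interpolation with $L^\infty(H)$ yields at best $L^4(0,T^*;L^3)$. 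But the $\ph_t\in L^4(L^6)$ estimate in Lemma~\ref{lemmino1} rests on $\tilde f\in L^4(L^6)$, hence on $\theta\in L^4(L^6)$, so the chain breaks before you can feed $m=\ph_t$ into Lemma~\ref{lemmino2}. A repair is possible but requires an extra loop through the nonlinear diffusion in~\eqref{kappa} (the term $\|\nabla|\theta|^{(q+2)/2}\|^2$ in~\eqref{thinfty} yields $\theta\in L^{q+2}(L^{3(q+2)})\subset L^4(L^6)$ once $m\in L^2(L^6)$), yet that in turn needs $\ph_t\in L^2(L^6)$, which your basic estimates do not provide either. None of this intermediate work is indicated in your outline.
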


\begin{proof}
The local existence proved in Theorem \ref{localthm} is actually global, thanks to further regularities of the solution holding for strictly positive times.
Indeed, assume now that $[0,T)$ is the maximal interval of existence of the solution and let $$(\hat \theta,\hat\ph,\hat\sigma)=\lim_{t\to T^-} (\theta(t),\fhi(t),\sigma(t)).$$

\smallskip\noindent
We first observe that $\hat\sigma\in L^\infty(\Omega)$ with $0\leq \hat \sigma\leq 1$.
Indeed, $\sigma \in C([0,T];L^2(\Omega))$ with $0\leq \sigma\leq 1$ a.e. in $Q_T$ then $\hat\sigma\in L^2(\Omega)$ with $0\leq \hat\sigma\leq 1$
meaning  in particular that $\hat\sigma\in L^\infty(\Omega)$.


\smallskip\noindent
The condition $\hat \fhi\in W^{\frac 53,6}(\Omega)$ follows from $\ph\in C([\delta,T]; W^{\frac 53,6}(\Omega))$, for any $\delta>0$.
Indeed, observe that $\ph$ solves
$$
\ph_t-\Delta\ph=\tilde f,
$$
where now $\tilde f\in L^6(\delta,T;L^6(\Omega))$ for any $\delta\in (0,T)$. Then \cite[Theorem 9.1]{LSU} implies
$$\ph\in L^6(\delta,T;W^{2,6}(\Omega)\cap W^{1,6}(\delta,T;L^6(\Omega))$$
so that $\ph\in C([\delta,T];W^{\frac 53, 6}(\Omega))$, yielding $\hat \fhi\in W^{\frac 53,6}(\Omega)$ and $\hat \ph\geq 0$.\\

\smallskip  \noindent
Finally, we see that $\hat\theta\in L^{3q}(\Omega)\cap V$ with $\hat\theta\geq 0$ a.e. in $\Omega$ and $K(\hat\theta)\in V$.
Lemma \ref{lemmino2} yields $\theta\in C([0,T];H)\cap L^\infty(0,T;L^{3q}(\Omega)\cap V)$ with $\theta\geq 0$ a.e. in $Q_T$.
A subsequent application of Strauss' Lemma (see, e.g.,  \cite{MB})
provides
$\theta\in C_w([0,T];L^{3q}(\Omega)\cap V)$, so that $\hat\theta\in L^{3q}(\Omega)\cap V$
and $\hat\theta\geq 0$ exactly as in \cite{MSJEE}. Besides, $K(\hat\theta)\in V$ owing to  \eqref{capcont}.

We then conclude that $(\hat\theta,\hat\ph,\hat\sigma)$ is an admissible initial datum so that we can extend further the solution, covering any time interval $[0,T]$ in a finite number of steps (see \cite{MSJEE}).

\end{proof}

\section*{Aknowledgements}
The authors S. Gatti and E. Ipocoana were partially supported by the project NOBILI$\textunderscore$PRIN2020
“Mathematics for industry 4.0
(Math4I4) 2020F3NCPX". Moreover, S. Gatti is a member of G.N.A.M.P.A. of Istituto Nazionale di Alta Matematica I.N.d.A.M.
Part of this work was done while S.G. and E.I. were visiting the Laboratoire de Math\'{e}matiques et Applications at the Universit\'e de Poitiers, whose hospitality is gratefully aknowledged. In particular, E.I. was financially supported by LIA-LYSM-AMU-CNRS-ECM-INdAM.
Eventually E.I. acknowledges the funding by the DFG within CRC 1114 “Scaling Cascades in Complex Systems" Project Number 235221301, Project C02 “Interface dynamics: Bridging stochastic and hydrodynamic descriptions".


\end{document}